\documentclass[a4paper]{amsart}
\usepackage{verbatim}
\usepackage[dvips]{color}
\usepackage{amssymb}
\usepackage[active]{srcltx}
\usepackage{latexsym}
\usepackage{amsmath}
 \usepackage{float}
\usepackage{mathrsfs} 
\usepackage[draft]{fixme}

\allowdisplaybreaks

\newtheorem{theorem}{Theorem}[section]

\newtheorem{lemma}{Lemma}[section]
\newtheorem{corollary}{Corollary}[section]
\theoremstyle{definition}

\newtheorem{remark}{Remark}[section]

\floatstyle{ruled} \restylefloat{figure} \restylefloat{table}

\numberwithin{equation}{section}

\newcommand{\beq}{\begin{equation}}
\newcommand{\bea}[1]{\begin{array}{#1} }
\newcommand{\eeq}{ \end{equation}}
\newcommand{\ea}{ \end{array}}

\newcommand{\al}{\alpha}

\newcommand{\rar}{\mbox{$\rightarrow$}}

\newcommand{\he}{\theta}

\def \R  {{\mathbb {R}}}

\def \n {{\nu}}


\def\mean#1{\mathchoice%
          {\mathop{\kern 0.2em\vrule width 0.6em height 0.69678ex depth -0.58065ex
                  \kern -0.8em \intop}\nolimits_{\kern -0.4em#1}}%
          {\mathop{\kern 0.1em\vrule width 0.5em height 0.69678ex depth -0.60387ex
                  \kern -0.6em \intop}\nolimits_{#1}}%
          {\mathop{\kern 0.1em\vrule width 0.5em height 0.69678ex
              depth -0.60387ex
                  \kern -0.6em \intop}\nolimits_{#1}}%
          {\mathop{\kern 0.1em\vrule width 0.5em height 0.69678ex depth -0.60387ex
                  \kern -0.6em \intop}\nolimits_{#1}}}

\def\vintslides_#1{\mathchoice%
          {\mathop{\kern 0.1em\vrule width 0.5em height 0.697ex depth -0.581ex
                  \kern -0.6em \intop}\nolimits_{\kern -0.4em#1}}%
          {\mathop{\kern 0.1em\vrule width 0.3em height 0.697ex depth -0.604ex
                  \kern -0.4em \intop}\nolimits_{#1}}%
          {\mathop{\kern 0.1em\vrule width 0.3em height 0.697ex depth -0.604ex
                  \kern -0.4em \intop}\nolimits_{#1}}%
          {\mathop{\kern 0.1em\vrule width 0.3em height 0.697ex depth -0.604ex
                  \kern -0.4em \intop}\nolimits_{#1}}}

\newcommand{\aveint}[2]{\mathchoice%
          {\mathop{\kern 0.2em\vrule width 0.6em height 0.69678ex depth -0.58065ex
                  \kern -0.8em \intop}\nolimits_{\kern -0.45em#1}^{#2}}%
          {\mathop{\kern 0.1em\vrule width 0.5em height 0.69678ex depth -0.60387ex
                  \kern -0.6em \intop}\nolimits_{#1}^{#2}}%
          {\mathop{\kern 0.1em\vrule width 0.5em height 0.69678ex depth -0.60387ex
                  \kern -0.6em \intop}\nolimits_{#1}^{#2}}%
          {\mathop{\kern 0.1em\vrule width 0.5em height 0.69678ex depth -0.60387ex
                  \kern -0.6em \intop}\nolimits_{#1}^{#2}}}

\def\eqn#1$$#2$${\begin{equation}\label#1#2\end{equation}}
\def\charfn_#1{{\raise1.2pt\hbox{$\chi
_{\kern-1pt\lower3pt\hbox{{$\scriptstyle#1$}}}$}}}

\def\qq1{q_*}
\def\q2{q_{**}}

\def\er{\mathbb R}

\newdimen\vintbar
\vintbar12pt
\def\vint{-\kern-\vintbar\int}

\def\0{\boldsymbol 0}

\newcommand{\divo}{\textnormal{div}}



\newtoks\by
\newtoks\paper
\newtoks\book
\newtoks\jour
\newtoks\yr
\newtoks\pages
\newtoks\vol
\newtoks\publ

\def\name[#1, #2]{#1 #2}
\def\ota{{\hbox{\bf ???}}}
\def\cLear{\by=\ota\paper=\ota\book=\ota\jour=\ota\yr=\ota
\pages=\ota\vol=\ota\publ=\ota}
\def\endpaper{\the\by, \textit{\the\paper},
{\the\jour} \textbf{\the\vol} (\the\yr), \the\pages.\cLear}
\def\endbook{\the\by, \textit{\the\book},
\the\publ, \the\yr.\cLear}
\def\endpap{\the\by, \textit{\the\paper}, \the\jour.\cLear}
\def\endproc{\the\by, \textit{\the\paper}, \the\book, \the\publ,
\the\yr, \the\pages.\cLear}



%
%
%

\begin{document}
\title[Cauchy integrals for the $p$-Laplace equation]{Cauchy integrals for the $p$-Laplace equation\\ in planar Lipschitz domains}

\address{Kaj Nystr\"{o}m\\Department of Mathematics, Uppsala University\\
S-751 06 Uppsala, Sweden}
\email{kaj.nystrom@math.uu.se}
\address{Andreas Ros\'en\\Mathematical Sciences, Chalmers University of Technology and University of Gothenburg\\
SE-412 96 G{\"o}teborg, Sweden}
\email{andreas.rosen@chalmers.se}

\author{K. Nystr{\"o}m, A. Ros\'en}

\maketitle

   \begin{abstract}
 \noindent We construct solutions to $p$-Laplace type equations in unbounded Lipschitz domains in the plane with prescribed boundary data in appropriate fractional Sobolev spaces. Our approach builds on a Cauchy integral representation formula for solutions.
 \\

    \noindent
   2000  {\em Mathematics Subject Classification.}  \\

\noindent    {\it Keywords and phrases: $p$-Laplace, $p$-harmonic, quasi-linear, Cauchy integral, functional calculus.}
    \end{abstract}

\setcounter{equation}{0} \setcounter{theorem}{0}
\section{Introduction}
\noindent In \cite{AAH}, \cite{AAMc}, \cite{AA}, \cite{AR}, new representations and new methods for solving boundary value problems for divergence form second order, real and complex, equations and systems were developed in domains Lipschitz diffeomorphic to the upper half space
$\mathbb R_+^{n+1}:=\{(x,t)\in\mathbb R^n\times \mathbb R:\ t>0\}$, $n\geq 1$. Focusing on the case of equations, the authors
consider equations
\begin{eqnarray}\label{g1}
Lu(x,t)=\sum_{i,j=1}^{n+1}\partial_i(a_{i,j}(x,t)\partial_ju(x,t))=0,\ \partial_{n+1}=\partial_t,\ \partial_i=\partial_{x_i},
\end{eqnarray}
with $A=A(x,t)=\{a_{i,j}(x,t)\}_{i,j=1}^{n+1}\in L_\infty(\mathbb R_+^{n+1},\mathbb C^{(n+1)^2})$, and with
$A$ being strictly accretive on a certain subspace $\mathcal{H}$ of $L_2(\mathbb R^{n},\mathbb C^{(n+1)^2})$. The key idea/discovery in these papers is that
the equation in \eqref{g1} becomes quite simple when expressing it in terms of the conormal gradient $f=\nabla_Au=[\partial_{\nu_A}u,\nabla_xu]^\ast$, $\ast$ denotes the transpose, $\partial_{\nu_A}u$ denotes the conormal derivative, instead of the potential $ u$ itself. Indeed, $f$ solves a set of generalized Cauchy-Riemann
equations expressed as a first order system
\begin{eqnarray}\label{g2}
\partial_tf+DBf=0,
\end{eqnarray}
where $D$ is a self-adjoint first order differential operator with constant coefficients and $B=B(x,t)$ is  multiplication with a bounded matrix $B$, strictly accretive on $\mathcal{H}$,  and pointwise determined by $A=A(x,t)$. The operator $DB$ is a bisectorial operator on $L_2(\mathbb R^n, \mathbb C^{(n+1)^2})$ and
if $A$, and hence $B$,  is independent of the $t$-coordinate, then  it is proved that $DB$ satisfies certain square functions estimates which implies that
$DB$, when $B$ is independent of the $t$-coordinate,  has an $L_2$-bounded holomorphic functional calculus. When $n=1$ this non-trivial fact follows from \cite{CMcM} and for $n\geq 2$ it is a consequence of the technology developed in the context of the resolution of the Kato conjecture, see
\cite{AHLMcT}, \cite{AKMc}. Using the holomorphic functional calculus for $DB$ one can then attempt to solve
\eqref{g2}, when $B$ is independent of the $t$-coordinate, by the semi-group formula $f=e^{-t|DB|}g$, with $g=g(x)$ in a suitable trace space and
$f$ has non-tangential maximal and square function estimates. The situation when $A$, and hence $B$, is dependent on the $t$-coordinate can be addressed
by perturbing the $t$-independent case and using a Picard iteration like argument, see \cite{AA}, \cite{AR}.

It is in general a very interesting program to attempt to understand to what extent the approach outlined above can be used in the context of non-linear elliptic partial differential equations and in this paper we establish one such result in the non-linear setting of operators of $p$-Laplace type. Note that there has recently been significant progress concerning the boundary behaviour of non-negative solutions
to the $p$-Laplace operator, in $\mathbb R^n$, $n\geq 1$, progress which gives at hand that many results
previous established in the linear case of the Laplace operator, $p=2$, see \cite{CFMS}, \cite{D}, \cite{JK}, remain valid also in the non-linear and potentially degenerate setting of the $p$-Laplace operator. Indeed, in \cite{LN1}, \cite{LN2}, \cite{LN3}, a number of results concerning the boundary behaviour
of positive $p$-harmonic functions, $1<p<\infty$, in a bounded Lipschitz domain $\Omega\subset\R^{n}$ were proved. In
particular, the boundary Harnack inequality and the H\"{o}lder continuity for ratios of positive $p$-harmonic functions, $1<p<\infty$, vanishing on a portion of $
\partial\Omega$ were established. Furthermore, the $p$-Martin boundary problem at $w\in
\partial\Omega$ was resolved under the assumption that $\Omega$ is either convex, $C^1$-regular or a Lipschitz domain with
small constant. Also, in \cite{LN4} these questions were resolved for $p$-harmonic functions vanishing on a portion of
certain Reifenberg flat and Ahlfors regular NTA-domains. The results and techniques developed
in \cite{LN1}-\cite{LN4} concerning $p$-harmonic functions have also been used and further
developed in \cite{LN5}, \cite{LN6} in the context of free boundary regularity in general two-phase free boundary problems
for the $p$-Laplace operator and in \cite{LN7} in the context of regularity and free boundary regularity, below the
continuous threshold, for the $p$-Laplace equation in Reifenberg flat and Ahlfors regular NTA-domains. These results are indications, and there are several others, that many results valid in the linear case may still, with the right approach, be possible to prove also in the non-linear context of the $p$-Laplace operator. While
we here restrict
ourselves to the case $n=1$, the planar case, for reasons to be discussed below, the ambition is to also understand the case $n\geq 2$ in future papers.

%
%
%
To outline our set-up, we let $\Omega\subset\mathbb R^2$ be an unbounded domain of the form $\Omega=\{(x,y): x\in\mathbb R,\ y>\phi(x)\}$, where $\phi:\mathbb R\to\mathbb R$ denotes a Lipschitz function with constant $M$. Our main model equation is, given $1  <p < \infty$, the $p$-Laplace equation
\begin{eqnarray}\label{1.1aauu}
\divo \,(|\nabla u|^{p-2}\nabla u)=0.
\end{eqnarray}
Given  $ 1  <p < \infty, $ we denote by  $W^{1 ,p} (\Omega) $ the space of equivalence classes of functions
$f\in L^p(\Omega)$ with distributional gradients $\nabla f = ( \partial_xf, \partial_yf )$ which are
in $L^p(\Omega)$ as well.  Let $\| f \|_{1,p} = \| f \|_p +  \| \, | \nabla f | \, \|_{p}  \,  $
be the norm in $ W^{1,p}(\Omega)$ where $ \| \cdot \|_p $ denotes
the usual norm in $L^p(\Omega)$.  Next, let $ C^\infty_0 (\Omega )$ be
the set of infinitely differentiable functions with compact support in $\Omega$,
and let  $ W^{1,p}_0 ( \Omega ) $ be the closure of $ C^\infty_0 ( \Omega ) $
in the norm of $ W^{1,p} ( \Omega )$. We say that $u$ is a weak solution to \eqref{1.1aauu} in $\Omega$ provided
$u \in W^ {1,p} ( \Omega ) $  and
\begin{eqnarray}\label{1.1} \int_\Omega   |\nabla u|^{p-2}\nabla u \cdot \nabla \he   \, dxdy = 0
 \end{eqnarray}
whenever $  \he  \in W^{1, p}_0 (  \Omega )$. In the special case $p=2$ the equation in \eqref{1.1aauu} reduces to the linear Laplace equation
\eqn{generalH-}
$$
\divo (\nabla u)=\partial_{xx}u+\partial_{yy}u=0\mbox{ in }\Omega.
$$
Let $\gamma=\{(x,\phi(x)): x\in\mathbb R\}=\partial\Omega$ and consider, at a
point $(x,y)\in\gamma$, the vector fields $(0,1)$, $(1,\phi^\prime(x))$. Note that the vector field $(1,\phi^\prime(x))$ is tangential to $\gamma$ at $(x,\phi(x))$. Based on these vector fields we introduce the first order differential operators
 \begin{eqnarray}\label{a-}
\partial_\perp&:=&  (0,1)\cdot (\partial_x,\partial_y)=\partial_y,\notag\\
\partial_{||}&:=& (1,\phi^\prime(x))\cdot  (\partial_x,\partial_y)=
\partial_x+\phi^\prime(x)\partial_y=\partial_x+\phi^\prime(x)\partial_\perp.
\end{eqnarray}
Let, given $ 1  <p < \infty$, $u$ be a weak solution to \eqref{1.1aauu} in $\Omega$. Then, using interior regularity results for the $p$-Laplace operator, see \cite{DiB},
\cite{L}, \cite{T}, $u$ is $C^{1,\epsilon}$-regular locally, for some $\epsilon\in(0,1)$,
and hence $\nabla u$ is well-defined pointwise. To proceed we first fix some notation.
Here and below, we often identify $\mathbb C$ and $\mathbb R^2$, writing $a+ib =(a,b)^\ast$, where
$\ast$ denotes transpose. Sometimes we also identity $a+ib$ with the multiplication operator
$\begin{bmatrix}
a & -b \\
b& a
\end{bmatrix}$.
We parametrize $\Omega$ with
$$
y= t+\phi(x)
$$
so that $(x,y)\in\Omega$ corresponds to $(x,t)\in\mathbb R^2_+$.
We sometimes write functions $f(x,t)$ as $f_t(x)$. Now, using this notation and the operators $\partial_\perp$, $\partial_{||}$, introduced in \eqref{a-}, in Section~\ref{sec:pdetoode} we prove that  $u$ is a weak solution to \eqref{1.1aauu} in $\Omega$ if and only $$f(x,t)=(f_1(x,t), f_2(x,t))^\ast=(\partial_xu(x,y),-\partial_yu(x,y))^\ast,$$
that is $f(x,t)=\overline{ \nabla u(x, t+\phi(x))}$,
 is a solution to the first order system
\begin{eqnarray}\label{a++a++bla+-1}
\partial_{t}f+B(f)Df=0.
\end{eqnarray}
Here
$$D:=\begin{bmatrix}
0&\partial_{x} \\
-\partial_{x}&0
\end{bmatrix}
= -i \partial_{x}
$$
and
\begin{eqnarray}\label{a++a++ll1}
B(f)&=&\frac 1{\Delta_p}\begin{bmatrix}
B_{11}(f)&B_{12}(f)\\
B_{21}(f)& B_{22}(f)
\end{bmatrix},
\end{eqnarray}
with
 \begin{eqnarray}\label{a++a++llkk}
B_{11}(f)&=&(p-2)f_2^2+| f|^2,\\ B_{22}(f) &=&(p-2)f_1^2+| f|^2,\notag\\
B_{12}(f)&=&((p-2)f_1^2+| f|^2)\phi^\prime(x),\notag\\
B_{21}(f)&=&-2(p-2)f_1f_2-(\phi^\prime(x)) ((p-2)f_1^2+| f|^{ 2}),
\end{eqnarray}
and  \begin{eqnarray}\label{a++a++llkkuu}
\Delta_p&=&((p-2)f_2^2+| f|^2 )\notag\\
&&-2\phi^\prime(x)(p-2)f_1f_2+(\phi^\prime(x))^2((p-2)f_1^2+| f|^{ 2}).
\end{eqnarray}
To ease notation, here we suppress the dependence of $B(f)$ on $p$ and on $\phi'(x)$.
Note that  if $\phi^\prime\equiv 0$, that is $\Omega=\R^2_+$, then
  \begin{eqnarray}\label{a++a++ll2}
B(f)=\frac 1{ (p-2)f_2^2+| f|^2}\begin{bmatrix}
(p-2)f_2^2+| f|^2&0\\
-2(p-2)f_1f_2&(p-2)f_1^2+| f|^2
\end{bmatrix},
\end{eqnarray}
and if $p=2$, then $B(f)=B_0$ where \begin{eqnarray}\label{a++a++ll3}
B_0(x)&:=&\frac 1{1+(\phi^\prime(x))^2}\begin{bmatrix}
1&\phi^\prime(x)\\
-\phi^\prime(x)&1
\end{bmatrix}
= \frac 1{1+i\phi'(x)}.
\end{eqnarray}
In particular, when $p=2$ and  $\phi^\prime\equiv 0$, then the system in \eqref{a++a++bla+-1} reduces to the classical Cauchy-Riemann equations.


Define the boundary Cauchy integral
\begin{eqnarray*}\label{uu9uuikkll}(S_0 h)_t(x)=\frac 1{2\pi i}\int_{\mathbb R}\frac {h(y)(1+i\phi^\prime(y))}{(y+i\phi(y))-(x+i(t+\phi(x)))}dy, \qquad h: \R\to \mathbb C,\end{eqnarray*}
and the solid Cauchy integral
\begin{eqnarray*}\label{uu9uuikkll}(\tilde S h)_t(x)=\frac 1{2\pi i}\iint_{\mathbb R^2_+}\frac {h(y,s)(1+i\phi^\prime(y))}{(y+i(s+\phi(y))-(x+i(t+\phi(x))}dyds,\qquad h: \R^2_+\to \mathbb C.\end{eqnarray*}
Let, given $0<\sigma <1$, $\dot H^\sigma(\R)$ denote the homogeneous fractional Sobolev space of order $\sigma$. Our main results are Theorem \ref{thm1-} and Theorem \ref{thm1} below. The first result, Theorem \ref{thm1-}, gives a Cauchy integral representation for solutions to the $p$-Laplace equation.

\begin{theorem}    \label{thm1-}
Let $1<p<\infty$, $0<\sigma <1$, and $0\le M<\infty$ be given.
Assume that $\phi:\mathbb R\to\mathbb R$ is a Lipschitz function with $\|\phi'\|_\infty\le M$
and assume that $u$ is a weak solution to \eqref{1.1aauu} in $\Omega=\{(x,y):\ x\in\mathbb R,\ y>\phi(x)\}$ satisfying
\begin{eqnarray}
\iint_\Omega|\nabla ^2u|^2(y-\phi(x))^{1-2\sigma}dxdy<\infty.
\end{eqnarray}
Let $f(x,t)=(\partial_xu(x,y),-\partial_yu(x,y))^\ast$, $y=t+\phi(x)$.
Then there exists $g\in \dot H^\sigma(\R)$ such that the Cauchy integral representation
\begin{eqnarray}\label{a++a++bla+kl}
f= S_0 g +\tilde S ((B_0-B(f)) Df),
\end{eqnarray}
holds in $\R^2_+$.
In particular, the
 trace of $f$ is $$f_0= \lim_{t\to 0^+}(S_0g) -\int_0^\infty \big( S_0(B_0-B(f_s)) Df_s\big)_{-s} ds  \in \dot H^\sigma(\R).$$
\end{theorem}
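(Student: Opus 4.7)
The strategy is to view the nonlinear equation $\partial_t f + B(f) Df = 0$ derived earlier as a perturbation of the $p=2$ case $B(f)\equiv B_0$. Rewriting it as
\[
L_0 f := \partial_t f + B_0 Df = (B_0 - B(f))Df,
\]
one observes that $L_0$ is, under the identification $\mathbb{C}\leftrightarrow\mathbb{R}^2$ and the change of variables $z=x+i(t+\phi(x))$, a nonvanishing multiple of the Cauchy--Riemann operator $\bar\partial_z$ on $\Omega$. Correspondingly, $S_0$ is the classical Cauchy integral on the Lipschitz graph $\gamma=\partial\Omega$ (producing solutions of $L_0 w = 0$ with prescribed trace), while $\tilde S$ is the associated Cauchy--Pompeiu solid integral, which inverts $L_0$ on $\mathbb{R}^2_+$. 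The claimed formula is then nothing but a Cauchy--Pompeiu representation for $f$,
\[
f = S_0(f\vert_{t=0}) + \tilde S(L_0 f),
\]
with $g$ identified as (a representative of) $f\vert_{t=0}$.

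To execute this rigorously, I would approximate $f$ by truncations and mollifications so that the pullback to $\Omega$ is smooth and compactly supported, apply the classical Cauchy--Pompeiu formula on $\Omega$ to the approximation, and pass to the limit. The weighted hypothesis $\iint|\nabla^2 u|^2(y-\phi(x))^{1-2\sigma}\,dx\,dy<\infty$ translates via the chain rule into $\partial_x f\in L^2(\mathbb{R}^2_+, t^{1-2\sigma}\,dx\,dt)$; by standard trace theory this gives $g\in\dot H^\sigma(\mathbb{R})$. Since $B_0$ and $B(f)$ are uniformly bounded matrices with $|B_0-B(f)|\lesssim 1$ depending only on $p$ and $M$, the right-hand side $(B_0-B(f))Df$ lies in the same weighted $L^2$ space, which is the natural domain of $\tilde S$. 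Weighted boundedness of the Cauchy integrals on a Lipschitz graph, available in the plane through the Coifman--McIntosh--Meyer theorem (as used already in the introduction), closes the approximation.

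For the trace identity, interchanging the order of integration in the definition of $\tilde S$ and rewriting the kernel shows directly that
\[
(\tilde S h)_t(x) = \int_0^\infty (S_0 h_s)_{t-s}(x)\,ds,
\]
because the $s$-slice is a Cauchy integral along the graph shifted vertically by $s$. Letting $t\to 0^+$ and invoking the Cauchy jump relations---which are responsible for the sign, since each $(S_0 h_s)_{-s}$ is evaluated on the far side of the shifted curve---recovers the stated formula for $f_0$.

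The principal obstacle is the rigorous Cauchy--Pompeiu representation at the prescribed regularity: proving that $f - \tilde S((B_0-B(f))Df)$ is genuinely of the form $S_0 g$ with $g\in\dot H^\sigma(\mathbb{R})$ requires careful weighted estimates for both $S_0$ and $\tilde S$ and a delicate interchange of limits at the boundary. The nonlinearity enters only through the uniform-in-$f$ bound on $B_0-B(f)$, so the analytic heart of the argument is the linear Cauchy-integral theory on the Lipschitz graph.
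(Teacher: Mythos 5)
Your proposal is correct in outline and rests on the same perturbative decomposition as the paper: rewrite the nonlinear system as $\partial_t f+B_0Df=(B_0-B(f))Df$ and represent $f$ as a boundary Cauchy integral of its trace plus a solid Cauchy integral of the right-hand side. Where you differ is in how the representation \eqref{a++a++bla+kl} is justified. The paper never invokes the Cauchy--Pompeiu formula directly: it applies the spectral projections $\tilde E_0^{\pm}$ of the bisectorial operator $B_0D$, integrates the resulting forward/backward evolution equations \eqref{uu9uull1+kk} (a Duhamel argument giving \eqref{uu9uull1+hha}), and obtains $g\in\dot H^\sigma(\R)$ from the square function estimates \eqref{si1}--\eqref{si3} together with Lemmas \ref{lemmata1} and \ref{lemmata2}, deferring the remaining rigor to \cite{R}. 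Your complex-analytic route is legitimate and is in fact recorded in the paper as Lemma \ref{CB}, which identifies $S_0$, $\tilde S$ and $S$ with the classical boundary Cauchy integral, the solid Cauchy integral and the Beurling transform; what it buys is transparency. What it does not buy is the quantitative content: the ``weighted boundedness of the Cauchy integrals'' that you appeal to in the space $L_2(\R^2_+,t^{1-2\sigma})$ \emph{is} Lemma \ref{lemmata2}, which the paper proves precisely through the functional calculus, so your approximation argument would circle back to the same estimates. Two smaller points you should address. First, $B(f)$ is homogeneous of degree zero in $f$ and undefined on the zero set of $f$; the paper uses quasiregularity of $f$ (Lemma \ref{rem1uu} and Corollary \ref{lemmata1+}) to conclude this set is discrete, hence that the multiplier $\mathcal{E}=I-B_0^{-1}B(f)$ is defined almost everywhere --- your proposal is silent on this. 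Second, your identity $(\tilde S h)_t=\int_0^\infty (S_0h_s)_{t-s}\,ds$ is the right idea (each $s$-slice is a Cauchy integral along the vertically shifted graph), but with the paper's conventions for $S_0$ at negative heights and the factor $B_0$ inside \eqref{uu9uull1+bb} the signs require care; you gesture at the jump relations, which is indeed where the minus sign in the stated trace formula originates, but this bookkeeping should be carried out explicitly.
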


 Below we use the Cauchy integral representation in \eqref{a++a++bla+kl} to prove solvability of boundary value problems for the $p$-Laplace equation, see Theorem \ref{thm1}. In the linear case and in the end point cases corresponding to $\sigma=0$ and $\sigma=1$, such a Cauchy representation yields non-trivial trace results for elliptic equations, see \cite{AA}. However, in the case $0<\sigma<1$, which we limit ourselves to here in the non-linear case, these trace results are trivial. Indeed, as we note in Theorem~\ref{thm0} stated below, the   trace result
$$
\dot H^1(\R^2_+,t^{1-2\sigma})\to \dot H^\sigma(\R)
$$
holds in general and not only for solutions to some PDE.

\begin{theorem}\label{thm1}
Let $p, \sigma, M, \phi, $ be as in Theorem~\ref{thm1-}. Then there  exists $\delta=\delta(\sigma,M)$, $\delta>0$, such that if $|p-2|<\delta$, then the following is true.
Given any boundary data $h\in  \dot H^\sigma(\mathbb R)$, there exists
a weak solution $u$ to \eqref{1.1aauu} in $\Omega=\{(x,y):\ x\in\mathbb R,\ y>\phi(x)\}$ satisfying
\begin{eqnarray}
\iint_\Omega|\nabla ^2u|^2(y-\phi(x))^{1-2\sigma}dxdy<\infty,
\end{eqnarray}
and the boundary condition
$$
  \partial_x u(x,\phi(x))= h(x), \qquad x\in\mathbb R,
$$
where the trace of $\nabla u$ is taken in the  sense of Theorem \ref{thm0}. The same solvability result also holds true for the
boundary condition $\partial_y u(x,\phi(x))= h(x)$.
\end{theorem}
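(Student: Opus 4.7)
The plan is to view the Cauchy integral representation \eqref{a++a++bla+kl} of Theorem~\ref{thm1-} as a nonlinear equation for $f$ and solve it by a contraction mapping argument, exploiting the fact that $B(f) - B_0 = O(p-2)$. Given $h\in \dot H^\sigma(\R)$, we look for $f\in X:=\dot H^1(\R^2_+, t^{1-2\sigma})$ (modulo constants) whose boundary trace has prescribed first or second component equal to $h$; the solution $u$ is then recovered from $\nabla u(x,y)=(f_1,-f_2)(x,y-\phi(x))$.

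First, I would gather linear estimates. By Theorem~\ref{thm0}, the trace map $X\to\dot H^\sigma(\R)$ is bounded, and $f\in X$ implies $Df\in L^2(\R^2_+,t^{1-2\sigma})$. Standard Cauchy integral bounds (Calder\'on commutators along the Lipschitz graph $\gamma$, lifted to the weighted setting) should yield
$$
\|S_0 g\|_X \le C(M)\|g\|_{\dot H^\sigma(\R)}, \qquad \|\tilde S h\|_X \le C(M)\|h\|_{L^2(\R^2_+, t^{1-2\sigma})}.
$$
Next, inspecting \eqref{a++a++ll1}--\eqref{a++a++llkkuu}, one verifies that $B(f)$ is $0$-homogeneous in $f$, depends smoothly on $f/|f|$ and on $\phi'$ uniformly for $\|\phi'\|_\infty\le M$, and decomposes as $B(f) = B_0 + (p-2)R(f/|f|,\phi')$ with $R$ bounded and smooth. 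Consequently $(B_0-B(f))Df$ is linear in $Df$ with a bounded, Lipschitz coefficient in $f$ (the apparent singularity at $f=0$ being harmless by $0$-homogeneity, which can be made rigorous via the $V$-function machinery customary in the $p$-Laplace literature), and one obtains
$$
\|(B_0-B(f_1))Df_1 - (B_0-B(f_2))Df_2\|_{L^2(\R^2_+,t^{1-2\sigma})} \le C(M)|p-2|\,\|f_1-f_2\|_X
$$
for $f_1,f_2$ in any bounded subset of $X$.

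In the unperturbed case $p=2$, the boundary value of $S_0 g$ is, by the Plemelj-Sokhotski jump formula, $\tfrac12 g + \tfrac12 H g$, where $H$ denotes the Cauchy-Hilbert transform along $\gamma$; this yields a bounded isomorphism from $\dot H^\sigma(\R)$ onto the subspace of $\dot H^\sigma(\R,\mathbb C)$ consisting of boundary traces of generalised holomorphic functions. Projection onto either real component is then a bounded invertible operator on $\dot H^\sigma(\R,\R)$. Hence, for each $f\in X$, one can choose $g=g(f,h)\in\dot H^\sigma(\R)$ so that the prescribed component of the boundary trace of $S_0 g + \tilde S((B_0-B(f))Df)$ equals $h$, with $\|g(f,h)\|_{\dot H^\sigma}\le C(M)(\|h\|_{\dot H^\sigma} + |p-2|\|f\|_X)$. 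The map $\Phi(f) := S_0 g(f,h) + \tilde S((B_0-B(f))Df)$ is then, for $|p-2|<\delta(\sigma,M)$ sufficiently small and $r:=C(M)\|h\|_{\dot H^\sigma}$, a self-map of $\{f\in X:\|f\|_X\le r\}$ and a contraction on it. Its unique fixed point $f^*$ satisfies \eqref{a++a++bla+kl} and therefore solves the first-order system \eqref{a++a++bla+-1}.

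Finally, by the equivalence established in Section~\ref{sec:pdetoode}, $f^*$ is the complex conjugate of $\nabla u$ for some weak solution $u$ of \eqref{1.1aauu} in $\Omega$; the curl-freeness required for integrating $(f^*_1,-f^*_2)$ into $\nabla u$ is built into \eqref{a++a++bla+-1}. The weighted $L^2$ bound on $\nabla^2 u$ transfers from $f^*\in X$, and the boundary condition is enforced by the choice of $g$ in the previous step, with the two cases $\partial_x u=h$ and $\partial_y u=h$ corresponding to the two real components of the trace of $f^*$. The main obstacle, I expect, is the nonlinear estimate above: since $B(f)$ is not smooth at $f=0$ for $p\neq 2$, only the $0$-homogeneity together with the explicit algebraic form in \eqref{a++a++ll1}--\eqref{a++a++llkkuu} lets one absorb the singularity while extracting the factor of $|p-2|$ that makes the contraction close.
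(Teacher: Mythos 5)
Your plan — treat the Cauchy representation \eqref{a++a++bla+kl} as a nonlinear equation for $f$ and close a Banach contraction in $X=\dot H^1(\R^2_+,t^{1-2\sigma})$ by extracting a factor $|p-2|$ — founders on the claimed Lipschitz estimate
$$
\|(B_0-B(f_1))Df_1 - (B_0-B(f_2))Df_2\|_{L^2(\R^2_+,t^{1-2\sigma})} \le C(M)\,|p-2|\,\|f_1-f_2\|_X .
$$
This is false. Splitting the difference as $(B_0-B(f_1))(Df_1-Df_2)+(B(f_2)-B(f_1))Df_2$, the first piece is fine, but the second is not. The matrix $B(f)$ is $0$-homogeneous in $f$ — it depends only on $f/|f|$ — so the map $f\mapsto B(f)$ is discontinuous at $f=0$ and admits no pointwise modulus $|B(f_1)-B(f_2)|\lesssim |p-2|\,|f_1-f_2|$: taking $f_1=\epsilon e_1$, $f_2=\epsilon e_2$ with $\epsilon\to 0$, the left side stays of size $|p-2|$ while the right side vanishes. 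The best pointwise bound is $|B(f_1)-B(f_2)|\lesssim |p-2|\min\{1,|f_1-f_2|/(|f_1|+|f_2|)\}$, and since $\|\cdot\|_X$ controls only $\nabla f$ (not $|f_1-f_2|/(|f_1|+|f_2|)$ paired against $|Df_2|$), there is no way to convert this into $\|f_1-f_2\|_X$. Invoking \ap the $V$-function machinery'' does not help here: that machinery addresses the degeneracy of $|\nabla u|^{p-2}$, not the directional discontinuity of $f\mapsto f/|f|$. For general (non-quasiregular) elements of $X$ arising during the iteration, you have no control over the set where $f$ is small.

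This is precisely why the paper does not use Banach's theorem. It applies Singbal's version of Schauder's fixed point theorem to the map $T\colon B\mapsto B(f)$, where $f$ solves the \emph{linear} problem \eqref{bvplin} with coefficient $B$, working in the space $V=L_\infty(\R^2_+,\R^{2\times 2})$ equipped with the strong operator topology (seminorms $p_f$). Continuity and compactness of $T$ are obtained by going through the quasiregular-mapping theory of Section~2: solutions of $\partial_t f + BDf=0$ with bounded accretive $B$ are quasiregular (Lemma~\ref{rem1uu}), hence by Lemma~\ref{rem1} and Arzel\`a--Ascoli one extracts locally uniformly convergent subsequences; the limit is quasiregular (Lemma~\ref{th2}) and non-constant (since its trace is $h\neq$ const by Theorem~\ref{thm0}); by Corollary~\ref{lemmata1+} the zeros of the $f_k$ and of $f$ are discrete, hence of measure zero; and then dominated convergence (using the uniform bound of Lemma~\ref{importantlemma}) gives $B(f_{k_j})\to B(f)$ in $V$. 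The zero-measure statement for the zero sets is exactly the substitute for the missing Lipschitz estimate, and it only holds because the relevant $f$'s are quasiregular; in your scheme the iterates are arbitrary elements of $X$ and you have no such control.

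One smaller point: your discussion of the trace/boundary operator is on the right track but conflates what is proved where. The paper does not invert the $p=2$ Hardy projection per se; it assumes $|p-2|<\delta$ where $\delta$ is chosen so the linear BVP \eqref{bvplin} is well posed whenever $\|B-B_0\|_\infty<\delta$, and then solves the boundary equation $(P_0^B g)_1=h$ with $P^B$ as in \eqref{rel1}, invoking \cite{R} and \cite{AAMc} for the stability of the trace projections under small perturbations of $B$. This part of your outline could be repaired, but the contraction step cannot without a fundamentally different idea.
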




\subsection{Organization of the paper} In section 2 we first show how quasi-linear PDEs in the plane can be reduced to  a vector valued ODE. In this section
we also show that our system of ODEs is closely related to the theory of quasiconformal and quasiregular mappings in the plane. Section 3 is devoted to
functional calculus and Cauchy type formulas in our setting and we here prove key quantitative estimates. Theorem
\ref{thm1-} and Theorem \ref{thm1} are proved in section 4, where we also for completeness include some details of the proof of Theorem \ref{thm0}. In section 5 we give a few concluding remarks discussing, in particular, generalizations of our main results to more general quasi-linear equations. We emphasize that our proofs of Theorem \ref{thm1-} and Theorem \ref{thm1} rely heavily on the fact that we are working
in the plane. For example, to be able to use the Cauchy integral representation of Theorem \ref{thm1-}
 we need to ensure that the zero sets $\{(x,t)\in\mathbb R^2_+:\ f(x,t)=0\}$ appearing in the construction, have measure zero. To conclude this we here make use of the connection to quasiregular mappings and the detailed results available concerning quasiregular mappings in the plane, see \cite{AIM},  \cite{IM},  \cite{IM1}.
Theorem \ref{thm1} is then proved by applying a fixed point argument to the Cauchy integral representation from Theorem \ref{thm1-}.

\setcounter{equation}{0} \setcounter{theorem}{0}
\section{Quasi-linear PDEs in the plane}  \label{sec:pdetoode}
\noindent
To stress generalities, in this section we consider quasi-linear equations
of the more general type
\eqn{generalHuu}
$$
\divo \,a(\nabla u)=\partial_x(a_1(\partial_xu,\partial_yu))+\partial_y(a_2(\partial_xu,\partial_yu))=0,
$$
where $a(z)=(a_1(z),a_2(z))$. Given $p$, $1<p<\infty$, we assume that the vector field $a \colon  \er^2 \to \er^2$ is $C^1$-regular and
satisfies the {growth and ellipticity assumptions
\begin{equation}\label{asp}
\left\{
    \begin{array}{c}
    |a(z)|+|\nabla a(z)||z| \leq L|z|^{p-1} \\ [5 pt]
    \n|z|^{p-2}|\xi|^{2} \leq \langle \nabla a(z)\xi, \xi
    \rangle
       \end{array}
    \right.
\end{equation}
whenever $z, \xi \in \mathbb R^2$ and for some fixed parameters  $0< \nu\leq L$. Here $\nabla a(z)$ denotes the Jacobian matrix of $a$.  We say that $u$ is a weak solution to \eqref{generalHuu} in $\Omega$ provided
$u \in W^ {1,p} ( \Omega ) $  and
\begin{eqnarray}\label{1.1} \int a(\nabla  u)\cdot\nabla \he  \, dx  = 0
 \end{eqnarray}
whenever $  \he  \in W^{1, p}_0 (  \Omega )$. If $a(z)=|z |^{ p - 2} z$,  then a solution to
\eqref{1.1} is referred to as a $p$-harmonic function and we emphasize that this main example of equations \eqref{generalHuu}, \eqref{asp}, is given by the $p$-Laplace equation introduced in \eqref{1.1aauu}.

\subsection{Reduction of the PDE to a system of ODEs}

Let $\Omega\subset\mathbb R^2$ be an unbounded domain of the form $\Omega=\{(x,y):\ x\in\mathbb R,\ y>\phi(x)\}$ where $\phi:\mathbb R\to\mathbb R$ denotes a Lipschitz function with constant $M$. Recall the  first order operators $\partial_\perp$, $\partial_{||}$, introduced in
\eqref{a-}. Using $\partial_\perp$, $\partial_{||}$ we see, given a vector field $v=(v_1,v_2)$, that
\begin{eqnarray}\label{a}
\mbox{curl } v&=&\partial_x v_2-\partial_y v_1=\partial_{||} v_2-\partial_\perp v_1-\phi^\prime(x)\partial_\perp v_2,\notag\\
\mbox{div }v&=&\partial_x v_1+\partial_y v_2=\partial_{||} v_1-\phi^\prime(x)\partial_\perp v_1+\partial_\perp v_2.
\end{eqnarray}
Let $w(x,y)=(w_1(x,y),w_2(x,y))=(\partial_xu(x,y),-\partial_yu(x,y))$, where $u$ is weak solution
to \eqref{generalHuu}. Then using \eqref{a} and \eqref{generalHuu} we have that
\begin{eqnarray}\label{a++}
0&=&-\partial_{||}  w_2-\partial_\perp  w_1+\phi^\prime(x)\partial_\perp  w_2,\notag\\
0&=&\partial_{||} a_1(\bar w)-\phi^\prime(x)\partial_\perp a_1(\bar w)+\partial_\perp a_2(\bar w).
\end{eqnarray}
Simply writing $a$ for $a(\bar w)$ and $\phi^\prime$ for $\phi^\prime(x)$, we see that the second relation \eqref{a++} can be expressed as
\begin{eqnarray}\label{a++a}
0&=&(\partial_{1}a_1)\partial_{||} w_1-(\partial_{2}a_1)\partial_{||} w_2 -\phi^\prime\bigl((\partial_{1}a_1)\partial_{\perp} w_1-(\partial_{2}a_1)\partial_{\perp} w_2\bigr )\notag\\
&&+(\partial_{1}a_2)\partial_{\perp} w_1-(\partial_{2}a_2)\partial_{\perp} w_2.
\end{eqnarray}
We next want to solve for $(\partial_{\perp} w_1,\partial_{\perp} w_2)$ in the system
\begin{eqnarray}\label{a++a+}
0&=&-\partial_{||}  w_2-\partial_\perp  w_1+\phi^\prime\partial_\perp  w_2,\notag\\
0&=&(\partial_{1}a_1)\partial_{||} w_1-(\partial_{2}a_1)\partial_{||} w_2 -\phi^\prime(x)\bigl((\partial_{1}a_1)\partial_{\perp} w_1-(\partial_{2}a_1)\partial_{\perp} w_2\bigr )\notag\\
&&+(\partial_{1}a_2)\partial_{\perp} w_1-(\partial_{2}a_2)\partial_{\perp} w_2.
\end{eqnarray}
Let \begin{eqnarray*}\label{a++a++}
A:=\begin{bmatrix}
-1& \phi^\prime\\
(\partial_{1}a_2)-\phi^\prime(\partial_{1}a_1)&-(\partial_{2}a_2)+\phi^\prime(\partial_{2}a_1)
\end{bmatrix},\ D:=\begin{bmatrix}
0&\partial_{||} \\
-\partial_{||}&0
\end{bmatrix}.
\end{eqnarray*}
Using this notation, the system in \eqref{a++a+} can be written as
\begin{eqnarray}\label{a++a++}
A\begin{bmatrix}
\partial_{\perp} w_1\\
\partial_{\perp} w_2
\end{bmatrix}= \begin{bmatrix}
1&0\\
\partial_{2}a_1&\partial_{1}a_1
\end{bmatrix} D\begin{bmatrix}
 w_1\\
 w_2
\end{bmatrix}=\begin{bmatrix}
1&0\\
\partial_{2}a_1&\partial_{1}a_1
\end{bmatrix} \begin{bmatrix}
 \partial_{||}w_2\\
 -\partial_{||}w_1
\end{bmatrix}.
\end{eqnarray}
In the following, we let
\begin{eqnarray}\label{a++a+++}
\Delta&:=&-((\partial_{1}a_2)-\phi^\prime(\partial_{1}a_1))\phi^\prime+((\partial_{2}a_2)-\phi^\prime(\partial_{2}a_1))\notag\\
&=&(\partial_{2}a_2)-\phi^\prime((\partial_{1}a_2)+(\partial_{2}a_1))+(\phi^\prime)^2(\partial_{1}a_1).
\end{eqnarray}
Using this, we have
\begin{eqnarray*}\label{a++a++ll}
A^{-1}=-\frac 1\Delta\begin{bmatrix}
(\partial_{2}a_2)-\phi^\prime(\partial_{2}a_1)&\phi^\prime\\
(\partial_{1}a_2)-\phi^\prime(\partial_{1}a_1)&1
\end{bmatrix},
\end{eqnarray*}
and
\begin{eqnarray*}\label{a++a++pp}
A^{-1}\begin{bmatrix}
1&0\\
(\partial_{2}a_1)&(\partial_{1}a_1)
\end{bmatrix}=-\frac 1{ \Delta}\begin{bmatrix}
\partial_{2}a_2&(\partial_{1}a_1)\phi^\prime\\
(\partial_{2}a_1)+(\partial_{1}a_2)-\phi^\prime(\partial_{1}a_1)&\partial_{1}a_1
\end{bmatrix}.
\end{eqnarray*}
Let, for $ w=( w_1, w_2)$ and $\phi$ given,
\begin{eqnarray*}\label{a++a++b}
 B^{w,\phi}(x,t):=\frac 1{ \Delta}\begin{bmatrix}
(\partial_{2}a_2)(\bar w)&(\partial_{1}a_1)(\bar w)\phi^\prime(x)\\
((\partial_{2}a_1)(\bar w)+(\partial_{1}a_2)(\bar w)-\phi^\prime(x)(\partial_{1}a_1)(\bar w))&(\partial_{1}a_1)(\bar w)
\end{bmatrix}.
\end{eqnarray*}
Then \eqref{a++a++} can be restated as
\begin{eqnarray}\label{a++a++blalu}
\begin{bmatrix}
\partial_{\perp} w_1\\
\partial_{\perp} w_2
\end{bmatrix}+ B^{w,\phi} D\begin{bmatrix}
 w_1\\
 w_2
\end{bmatrix}=0.
\end{eqnarray}
We summarize our findings as follows.
\begin{lemma}\label{reduk}  A function $u$ is a weak solution to \eqref{generalHuu} in $\Omega$ if and only if
$$f(x,t)=(f_1(x,t), f_2(x,t)):=\big(\partial_xu(x,t+\phi(x)),-\partial_yu(x,t+\phi(x))\big)^\ast$$ satisfies
\begin{eqnarray}\label{a++a++bla+lu}
\partial_{t}\begin{bmatrix}
f_1\\
f_2
\end{bmatrix}+B^{f,\phi}D\begin{bmatrix}
 f_1\\
f_2
\end{bmatrix}=0,\ D:=\begin{bmatrix}
0&\partial_{x} \\
-\partial_{x}&0
\end{bmatrix},
\end{eqnarray}
in $\mathbb R_+^2:=\{(x,t)\in\mathbb R^2:\ t>0\}$ where
\begin{multline}\label{a++a++ll}
B^{f,\phi}(x,t) \\:=
\frac 1{ \Delta}\begin{bmatrix}
\partial_{2}a_2(\bar f)&(\partial_{1}a_1(\bar f))\phi^\prime(x)\\
(\partial_{2}a_1(\bar f))+(\partial_{1}a_2(\bar f))-\phi^\prime(x)(\partial_{1}a_1(\bar f))&\partial_{1}a_1(\bar f)
\end{bmatrix}
\end{multline}
and
\begin{multline}\label{a++a++ll+}
\Delta :=(\partial_{2}a_2)(\bar f)-\phi^\prime(x)\big((\partial_{1}a_2)(\bar f)+(\partial_{2}a_1)(\bar f)\big)+(\phi^\prime(x))^2(\partial_{1}a_1)(\bar f).
\end{multline}
\end{lemma}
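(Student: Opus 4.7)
The plan is to read off the claimed equivalence directly from the chain of identities \eqref{a}--\eqref{a++a++blalu} already derived above, together with the change of coordinates $y = t+\phi(x)$.

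For the forward direction, I would start with $u \in W^{1,p}(\Omega)$ a weak solution to \eqref{generalHuu} and invoke the interior regularity results of \cite{DiB}, \cite{L}, \cite{T} to conclude $u \in C^{1,\ep}_{\loc}(\Omega)$. On the open set $\{|\nabla u| > 0\}$ the matrix $\nabla a(\nabla u)$ is continuous and uniformly elliptic, so Schauder theory upgrades $u$ to $C^{2,\alpha}_{\loc}$ there, which lets the text's pointwise manipulations proceed. The first line of \eqref{a++a+} then encodes the trivial symmetry $\partial_{xy} u = \partial_{yx} u$ rewritten via $\partial_x = \partial_{||} - \phi^\prime \partial_\perp$, while the second expresses $\divo\, a(\nabla u) = 0$ after expansion by the chain rule. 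To invert the matrix $A$ of \eqref{a++a++} I would test the ellipticity \eqref{asp} against the specific choice $\xi = (-\phi^\prime, 1)^\ast$, which yields $\det A = \Delta \geq \nu |\nabla u|^{p-2}(1+(\phi^\prime)^2) > 0$ on $\{|\nabla u| > 0\}$. Solving the $2\times 2$ system produces \eqref{a++a++blalu}, and the coordinate change $y = t + \phi(x)$, under which $\partial_\perp \mapsto \partial_t$ and $\partial_{||} \mapsto \partial_x$, transcribes it into \eqref{a++a++bla+lu}.

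For the converse, assume $f$ satisfies \eqref{a++a++bla+lu} on $\R^2_+$ and define $w(x,y) = f(x, y - \phi(x))$ on $\Omega$, so that $w$ satisfies \eqref{a++a++blalu}. Multiplying through by $A$ evaluated at $\bar w$ undoes the earlier inversion and recovers the pair \eqref{a++a+}. The first line of the pair amounts to $\partial_x w_2 + \partial_y w_1 = 0$, i.e.\ the curl-free condition on $(w_1, -w_2)$; since $\Omega$ is simply connected this provides $u \in W^{1,p}_{\loc}(\Omega)$ with $\partial_x u = w_1$ and $\partial_y u = -w_2$. The second line then reads $\divo\, a(\nabla u) = 0$ in the strong sense on $\{|\nabla u| > 0\}$, which I would promote to the weak formulation \eqref{1.1} by testing against $\he \in C^\infty_0(\Omega)$ and using that the integrand vanishes on the complement.

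The main subtlety I anticipate is the behavior on the critical set $\{|\nabla u| = 0\}$, where $\Delta$ either degenerates (for $p > 2$) or blows up (for $p < 2$), so $B^{f,\phi}$ is formally ill-defined there. The resolution is that $f$ and $Df$ both vanish on this set, so \eqref{a++a++bla+lu} is trivially satisfied regardless of how $B^{f,\phi}$ is extended across it; the measure-zero property of the critical set, inherited from the quasiregular-mapping theory \cite{AIM}, \cite{IM}, \cite{IM1} that the introduction defers to, ensures that the passage between \eqref{generalHuu} and \eqref{a++a++bla+lu} loses no information.
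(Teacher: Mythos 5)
Your proposal is correct and follows essentially the same route as the paper, whose ``proof'' of Lemma~\ref{reduk} is precisely the chain of identities \eqref{a}--\eqref{a++a++blalu} preceding the statement (the lemma is introduced with ``We summarize our findings as follows''), including the same inversion of $A$ with $\det A=\Delta$ and the same change of variables $y=t+\phi(x)$. The extra care you take with the $C^{1,\epsilon}$ interior regularity, the lower bound $\Delta\ge\nu|\nabla u|^{p-2}(1+(\phi')^2)$ via $\xi=(-\phi',1)^\ast$, and the degenerate set $\{|\nabla u|=0\}$ is sound and matches how the paper handles these issues elsewhere (Lemma~\ref{reduk+} and the quasiregular-mapping discussion).
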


Recall that a $2\times 2$-dimensional matrix $B$, defined in $\mathbb R^2$ and potentially complex valued, is said to be accretive if
\begin{eqnarray}\label{uu3}
\kappa:=\mbox{essinf}_{(x,t)\in \mathbb R^{2}}\inf_{\xi\in \mathbb C^{2}\setminus\{0\}}\frac{\mbox{Re}(B(x,t)\xi,\xi)}{|\xi|^2}>0.
\end{eqnarray}
\begin{lemma}\label{reduk+} Let $a \colon  \er^2 \to \er^2$  be a $C^1$-regular vector field satisfying \eqref{asp} for  some fixed parameters  $0< \nu\leq L$. Let $B=B^{f,\phi}$ be as in \eqref{a++a++ll}, \eqref{a++a++ll+}. Then $B\in L_\infty(\mathbb R^{2}_+,\mathbb C^2)$ and $B$
is accretive in the sense of \eqref{uu3}. Furthermore, the $L_\infty$-bound on $B$, and the parameter of accretivity $\kappa$, depend only on $p$, $M$,  $\nu$, and $L$.
\end{lemma}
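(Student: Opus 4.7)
The plan is to reduce both the $L_\infty$-bound and the accretivity of $B^{f,\phi}$ to properties of the Jacobian $N := \nabla a(\bar f)$, viewed as the $2\times 2$ matrix with entries $N_{ij} = \partial_j a_i$. The key algebraic observation, to be recorded at the outset, is the identity
$$ \Delta = \langle N v, v\rangle, \qquad v := (-\phi'(x), 1)^\ast, $$
which is immediate by expanding \eqref{a++a++ll+}. Together with \eqref{asp} this yields, pointwise at points where $\bar f(x,t)\neq 0$,
$$ \nu|\bar f|^{p-2} \le \nu(1+(\phi')^2)|\bar f|^{p-2} \le \Delta \le L(1+M^2)|\bar f|^{p-2}.$$

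The $L_\infty$-bound is then routine: each entry of the numerator matrix in \eqref{a++a++ll} is a product of something bounded by $2+M$ and a derivative $\partial_j a_i$, which in turn is bounded by $L|\bar f|^{p-2}$ via the growth part of \eqref{asp}. Dividing by the lower bound $\Delta\ge \nu |\bar f|^{p-2}$, the critical factor $|\bar f|^{p-2}$ cancels and we obtain $\|B^{f,\phi}\|_\infty \le c(M,\nu,L)$.

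For the accretivity, I would compute $\langle B^{f,\phi}\xi, \xi\rangle$ directly for real $\xi=(\xi_1,\xi_2)$. The useful miracle is that the $N_{11}\phi'\xi_1\xi_2$ contribution coming from the $(1,2)$-entry of $B^{f,\phi}$ cancels exactly against the $-\phi' N_{11}\xi_1\xi_2$ contribution from the $(2,1)$-entry, giving
$$ \langle B^{f,\phi}\xi, \xi\rangle = \frac{1}{\Delta}\Big(N_{22}\xi_1^2 + (N_{12}+N_{21})\xi_1\xi_2 + N_{11}\xi_2^2\Big) = \frac{\langle NJ\xi, J\xi\rangle}{\Delta},$$
where $J\xi := (\xi_2,\xi_1)^\ast$. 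Applying the ellipticity of $N$ to $J\xi$ (and using $|J\xi|=|\xi|$) together with the upper bound on $\Delta$ obtained above gives
$$ \langle B^{f,\phi}\xi, \xi\rangle \ge \frac{\nu|\bar f|^{p-2}|\xi|^2}{L(1+M^2)|\bar f|^{p-2}} = \frac{\nu}{L(1+M^2)}|\xi|^2.$$
Since $B^{f,\phi}$ is real-valued, this real positive-definiteness upgrades automatically to the complex accretivity in the sense of \eqref{uu3} by splitting $\xi=\alpha+i\beta$ and using $\mathrm{Re}(B^{f,\phi}\xi,\xi) = \langle B^{f,\phi}\alpha,\alpha\rangle + \langle B^{f,\phi}\beta,\beta\rangle$.

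The one subtlety is the set where $\bar f(x,t)=0$: for $p\neq 2$ the formulas \eqref{a++a++ll}, \eqref{a++a++ll+} degenerate there, but this exceptional set will be shown elsewhere in the paper to have zero measure via the link to quasiregular mappings, and on it we may simply redefine $B^{f,\phi}$ to equal $B_0$ from \eqref{a++a++ll3} without affecting the a.e.\ bounds. The main thing that keeps the proof clean is the algebraic cancellation of the $\phi'$ cross-terms in $\langle B^{f,\phi}\xi,\xi\rangle$; without it one would face a quadratic form depending linearly on $\phi'$ that would not obviously admit a uniform positive lower bound, and verifying this cancellation explicitly is the step I expect to demand the most care.
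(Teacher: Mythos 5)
Your proof is correct and follows essentially the same route as the paper's: the lower bound $\Delta\ge\nu|\bar f|^{p-2}(1+(\phi')^2)$ from ellipticity, the cancellation of the $\phi'(\partial_1a_1)$ cross-terms in $\mathrm{Re}(B\xi,\xi)$, and ellipticity applied to the swapped vector. In fact you make explicit two points the paper leaves implicit, namely the needed upper bound $\Delta\le L(1+M^2)|\bar f|^{p-2}$ and the treatment of the set where $\bar f=0$.
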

\begin{proof} First, using the ellipticity type condition in \eqref{asp} we see that $$\Delta\geq\nu|\bar f|^{p-2}(1+(\phi^\prime(x))^2).$$ Hence, using also the upper bound in
\eqref{asp} we can conclude that
$$|B|\leq\frac {c(L)} \Delta |\bar f|^{p-2}(1+\phi^\prime(x))\leq c(p,M,\nu,L).$$
To estimate the parameter of accretivity, let $\xi\in \mathbb C^{2}\setminus\{0\}$, $(x,t)\in \mathbb R^{2}_+$, and note that
\begin{eqnarray}\label{uu3ll}
\mbox{Re}(B(x,t)\xi,\xi)&=&B_{11}|\xi_1|^2+B_{22}|\xi_2|^2+B_{12}\mbox{Re}(\bar\xi_1\xi_2)+B_{21}\mbox{Re}(\xi_1\bar\xi_2)\notag\\
&=&(\partial_{1}a_1(\bar f))|\xi_1|^2+(\partial_{2}a_2(\bar f))|\xi_2|^2\notag\\
&+&((\partial_{1}a_2(\bar f))+(\partial_{2}a_1(\bar f)))\mbox{Re}(\xi_1\bar\xi_2)
\end{eqnarray}
and the estimate now follows from \eqref{asp}.
\end{proof}

\subsection{Quasi-regular mappings in the plane}

Consider a function  $f:\Omega\to\Omega'$ where $\Omega,\Omega'\subset\mathbb C$. Let
$z=x+iy\in\mathbb C$ and assume that $f$ has a derivative $\nabla f$ at $z$. We let
$\partial_zf=(\partial_xf-i\partial_yf)/2$, $\partial_{\bar z}f=(\partial_xf+i\partial_yf)/2$ and we write the derivative as
$$\nabla f(z)h=\partial_zf(z)h+\partial_{\bar z}f(z)\bar h,\ h\in \mathbb C=\mathbb R^2.$$ Note that
$$|\nabla f(z)|^2=|\partial_zf(z)|^2+|\partial_{\bar z}f(z)|^2$$
and that the Jacobian equals
$$J(z,f)=|\partial_zf(z)|^2-|\partial_{\bar z}f(z)|^2.$$ Recall that if the mapping $f$ satisfies $f\in W^{1,2}_{{loc}}(\Omega)$,
 $f$ is orientation preserving so that $J(z,f)\geq 0$ a.e., and if
\begin{eqnarray}\label{qr}
|\nabla f(z)|^2\leq K J(z,f)\mbox{ for almost every $z\in\Omega$},
\end{eqnarray}
then $f$ is called $K$-quasiregular.
The smallest number $K$ for which \eqref{qr} holds is called the dilation of $f$ and we denote this number by $K(f)$. Constant functions are by definition $0$-quasiregular. If, in addition, $f$ is a homeomorphism, then $f$ is called $K$-quasiconformal. Note that \eqref{qr} can also be expressed as
\begin{eqnarray}\label{qr+}
|\partial_zf(z)|^2+|\partial_{\bar z}f(z)|^2\leq K(|\partial_zf(z)|^2-|\partial_{\bar z}f(z)|^2)
\end{eqnarray}
or equivalently
\begin{eqnarray}\label{qr++}
|\partial_{\bar z}f(z)|^2\leq \frac {K-1}{K+1}|\partial_zf(z)|^2\mbox{ for almost every $z\in\Omega$}.
\end{eqnarray}
In particular, $f:\Omega\to\Omega'$ is $K$-quasiregular if and only if  $f\in W^{1,2}_{{loc}}(\Omega)$,  $f$ is orientation preserving and \begin{eqnarray}\label{qr+++}
\partial_{\bar z}f(z)=\mu(z)\partial_zf(z)\mbox{ for almost every $z\in\Omega$,}
\end{eqnarray}
where $\mu$, called the Beltrami coefficent of $f$, is a bounded measurable function satisfying
\begin{eqnarray}\label{qr++++}
\|\mu\|_\infty\leq \sqrt{\frac {K-1}{K+1}}<1.
\end{eqnarray}
Note that the differential equation in \eqref{qr+++} is called the Beltrami equation and it is this equation that provides the link from the geometric theory of quasiconformal mappings to complex analysis and to elliptic partial differential equations. For accounts of these connections we refer to  \cite{AIM},  \cite{IM},  and \cite{IM1}. The following lemma connects
the notion of quasiregular mappings to the set-up used in this paper.
\begin{lemma}\label{rem1uu}
Let $f = (f_1 , f_2 )\in W^{1,2}_{{loc}}(\Omega)$ be non-constant. If $\partial_tf+BDf=0$
for some bounded and accretive $B$ then $f$ is quasiregular. Furthermore, if $f$ is quasiregular and if we define the complex linear multiplier $B:=-\partial_tf/Df$,  then $B$ is  bounded and accretive and
$\partial_tf+BDf=0$.
\end{lemma}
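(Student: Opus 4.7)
The plan is to exploit the single algebraic identity
\[
J(z,f) \;=\; \mbox{Re}(BDf, Df),
\]
where $J$ is the Jacobian of $f=(f_1,f_2)$. This identity comes directly from the equation $\partial_t f + BDf = 0$, and using the identification $\mathbb C \cong \mathbb R^2$ it bridges the matrix formulation \eqref{a++a++blalu} and the Beltrami description \eqref{qr+++} of quasiregularity.

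\emph{Forward direction.} Assume $\partial_t f + BDf = 0$ with $B$ bounded and accretive. I would expand
\[
J = \partial_x f_1\,\partial_t f_2 - \partial_x f_2\,\partial_t f_1,
\]
substitute $\partial_t f = -BDf$ using $Df=(\partial_x f_2,-\partial_x f_1)^\ast$, and check by a four-term computation that the result equals $(BDf,Df)$. The accretivity bound \eqref{uu3} applied to the real vector $\xi=Df$ then gives $J = \mbox{Re}(BDf,Df) \ge \kappa |Df|^2 = \kappa|\partial_x f|^2 \ge 0$, so $f$ is orientation preserving. The pointwise bound $|\partial_t f| = |BDf| \le \|B\|_\infty |\partial_x f|$ combined with $|\partial_x f|^2 \le J/\kappa$ then yields
\[
|\nabla f|^2 = |\partial_x f|^2 + |\partial_t f|^2 \le (1+\|B\|_\infty^2)\,|\partial_x f|^2 \le \frac{1+\|B\|_\infty^2}{\kappa}\,J,
\]
so $f$ is $K$-quasiregular with $K=(1+\|B\|_\infty^2)/\kappa$.

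\emph{Converse direction.} Given $f$ non-constant and quasiregular, define $B:=-\partial_t f/Df$ wherever $Df\neq 0$, and set it to any fixed bounded value elsewhere. At the exceptional points, quasiregularity forces $J=0$ and hence $|\nabla f|^2\le KJ=0$, so $\partial_t f = 0$ as well, and the equation $\partial_t f + BDf = 0$ holds trivially. On $\{Df\neq 0\}$ use the complex identification $Df=-i\partial_x f$ to compute
\[
B = \frac{-i\,\partial_t f\cdot\overline{\partial_x f}}{|\partial_x f|^2}, \qquad \mbox{Re}(B) = \frac{J}{|\partial_x f|^2}.
\]
Then $|\partial_x f|^2\le|\nabla f|^2\le KJ$ yields $\mbox{Re}(B)\ge 1/K$, while Hadamard's inequality $J\le|\partial_x f||\partial_t f|$ combined with $|\partial_t f|^2\le KJ$ gives $|\partial_t f|\le K|\partial_x f|$, hence $|B|\le K$. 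Finally, viewed as the matrix $\bigl[\begin{smallmatrix} a & -b \\ b & a\end{smallmatrix}\bigr]$ representing multiplication by $B=a+ib$, one computes $(M\xi,\xi)=a|\xi|^2+2ib\,\mbox{Im}(\xi_1\bar\xi_2)$ for $\xi\in\mathbb C^2$, so $\mbox{Re}(M\xi,\xi)=a|\xi|^2$; accretivity in the sense of \eqref{uu3} is therefore exactly the pointwise condition $\mbox{Re}(B)\ge\kappa>0$ already established with $\kappa=1/K$.

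The only non-routine step is the careful translation between the real $2\times 2$ matrix picture and the complex scalar picture (plus handling the set $\{Df=0\}$ where the ratio is ill-defined); the computational core is the four-term identity $J=(BDf,Df)$, and everything else is direct algebra.
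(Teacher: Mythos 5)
Your proposal is correct and follows essentially the same route as the paper's proof: the computational core in both is the identity $J(f)=-(\partial_tf,Df)=(BDf,Df)$ together with $|\nabla f|^2=|Df|^2+|\partial_tf|^2$, from which both directions follow by the same algebraic estimates. Your write-up merely supplies details the paper leaves implicit (the explicit constant $K=(1+\|B\|_\infty^2)/\kappa$, the treatment of the null set $\{Df=0\}$, and the verification that accretivity of the matrix representing multiplication by $a+ib$ reduces to $\mbox{Re}(B)=a\ge\kappa$), all of which check out.
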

\begin{proof} Assume that $\partial_tf+BDf=0$ for some bounded and accretive $B$. Simply note that
\begin{eqnarray}\label{qr++++pp}
|\nabla f|^2&=&|\partial_xf_1|^2+|\partial_xf_2|^2+|\partial_tf_1|^2+|\partial_tf_2|^2=|\partial_tf|^2+|Df|^2,\notag\\
J(f)&=&\partial_xf_1\partial_tf_2-\partial_xf_2\partial_tf_1=-(\partial_tf,Df),
\end{eqnarray}
and hence $|\nabla f|^2\approx |Df|^2\lesssim (BDf, Df)= -(\partial_t f, Df)$, so $f$ is quasiregular. To prove
the other direction, assume that $f$ is quasiregular and let $B:=-\partial_tf/Df$ by complex division. Then $\partial_tf+BDf=0$ and $B$ is bounded since $|\partial_t f|^2+ |Df|^2 \lesssim |\partial_t f| |Df|$. Moreover
\begin{eqnarray}\label{qr++++pp1}
1\approx \frac {J(f)}{|\nabla f|^2}=\frac{-(\partial_t f, Df)}{|\partial_t f|^2+|Df|^2}\approx \frac{(BDf, Df)}{|Df|^2}= \mbox{Re} (B),
\end{eqnarray}
so $B$ is accretive. This completes the proof of the lemma.
\end{proof}

We next note the following existence and uniqueness result for the Beltrami equation in \eqref{qr+++}, assuming that $\mu$ has compact support, as well as
the Stoilow factorization of quasiregular mappings with subsequent corollary.
Besides the more modern references given below for these results, we also refer the reader to the very readable lecture notes \cite[Chapter V]{Ahl}.

\begin{theorem}\label{exU} Let $\mu$ bounded measurable function on $\mathbb C$ with compact support and assume that
\begin{eqnarray}\label{qr++++ll}
\|\mu\|_\infty\leq k\mbox{ for some $k<1$}.
\end{eqnarray} Then there exists a
unique $f\in W^{1,2}_{{loc}}(\Omega)$ such that
\begin{eqnarray}\label{qr+++ll}
\partial_{\bar z}f(z)&=&\mu(z)\partial_zf(z)\mbox{ for almost every $z\in \mathbb C$},\notag\\
f(z)&=&z+O(z^{-1})\mbox{ as $z\to\infty$}.
\end{eqnarray}
Moreover, there exists $p(k)$ such that $f\in W^{1,p}_{{loc}}(\Omega)$ for all $p$, $2\leq p<p(k)$.
\end{theorem}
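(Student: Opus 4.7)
The strategy is the classical one behind the measurable Riemann mapping theorem: convert the Beltrami equation into a Neumann series in $L^p(\mathbb{C})$ using the Cauchy transform $\mathcal{C}$ and the Beurling--Ahlfors transform $T=\partial_z\mathcal{C}$. I would look for a solution in the form $f(z)=z+\mathcal{C}h(z)$ for a suitable $h\in L^p(\mathbb{C})$ of compact support, where
\[
\mathcal{C}h(z)=\frac{1}{\pi}\int_{\mathbb{C}}\frac{h(w)}{z-w}\,dA(w).
\]
Since $\partial_{\bar z}\mathcal{C}h=h$ and $\partial_z\mathcal{C}h=Th$ in the distributional sense, substituting the ansatz into $\partial_{\bar z}f=\mu\,\partial_z f$ reduces the problem to the integral equation
\[
(I-\mu T)h=\mu\qquad\text{in }L^p(\mathbb{C}).
\]

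The first key ingredient is the $L^p$ theory for $T$. Plancherel yields $\|T\|_{L^2\to L^2}=1$ (the Fourier symbol of $T$ is $\bar\xi/\xi$), $T$ is a Calder\'on--Zygmund operator and hence bounded on every $L^p$ with $1<p<\infty$, and by Riesz--Thorin interpolation the map $p\mapsto \|T\|_{L^p\to L^p}$ is continuous with value $1$ at $p=2$. Consequently there exists $p(k)>2$ such that $k\,\|T\|_{L^p\to L^p}<1$ for $2\le p<p(k)$. For such $p$ the operator $I-\mu T$ has norm $<1$ on $L^p(\mathbb{C})$ and is therefore invertible by a Neumann series. Since $\mu$ is bounded with compact support we have $\mu\in L^p(\mathbb{C})$, so $h:=(I-\mu T)^{-1}\mu=\sum_{n\ge 0}(\mu T)^n\mu\in L^p(\mathbb{C})$ is well defined.

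Setting $f(z):=z+\mathcal{C}h(z)$ gives $f\in W^{1,p}_{\mathrm{loc}}(\mathbb{C})$ with $\partial_{\bar z}f=h$ and $\partial_z f=1+Th$, so the Beltrami equation holds a.e. The compact support of $h$ together with $p>2$ forces the asymptotic $\mathcal{C}h(z)=-\frac{1}{\pi z}\int h\,dA+O(|z|^{-2})$ as $|z|\to\infty$, which is the prescribed normalization $f(z)=z+O(z^{-1})$. For uniqueness, the difference $g:=f_1-f_2$ of two solutions satisfies $\partial_{\bar z}g=\mu\,\partial_z g$ and vanishes at infinity; since $\mu$ is compactly supported, $g$ is holomorphic outside a compact set, which combined with the local H\"older continuity of quasiregular maps yields $g\in L^\infty(\mathbb{C})$. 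Then $\tilde h:=\partial_{\bar z}g$ is compactly supported and in $L^p$ with $(I-\mu T)\tilde h=0$, so $\tilde h\equiv 0$ by invertibility, whence $g$ is a bounded entire function vanishing at infinity, hence $g\equiv 0$.

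The main technical point is the continuity of $p\mapsto\|T\|_{L^p\to L^p}$ at $p=2$, which is what produces the exponent $p(k)>2$ and thereby the higher integrability statement. The compact support of $\mu$ enters twice in an essential way: it places $\mu$ in $L^p(\mathbb{C})$ globally, so the Neumann series converges in $L^p$, and it forces the $O(|z|^{-1})$ decay of $\mathcal{C}h$ at infinity required for the normalization. Beyond these two points the argument is a direct appeal to the classical Beltrami theory developed in \cite{AIM,IM,IM1}.
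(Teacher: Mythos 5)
The paper's entire proof of this statement is the citation ``This is Theorem 5.1.2 in \cite{AIM}.'' Your Neumann-series argument via the Cauchy and Beurling transforms, with Riesz--Thorin giving the threshold $p(k)>2$, is precisely the classical proof underlying that cited theorem (and Theorem~\ref{stoilow}, cf.\ Ahlfors \cite{Ahl}), so the two are in essential agreement; the only cosmetic slip is the sign in the expansion $\mathcal{C}h(z)=\tfrac{1}{\pi z}\int h\,dA+O(|z|^{-2})$, which does not affect the $O(z^{-1})$ normalization.
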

\begin{proof} This is Theorem 5.1.2 in \cite{AIM}.
\end{proof}
\begin{theorem}\label{stoilow} Let $f:\Omega\to\Omega'$ be a homeomorphic solution to the Beltrami equation in
\eqref{qr+++}, with $|\mu(z)|\leq k<1$ almost everywhere on $\Omega$, and assume that $f\in W^{1,1}_{{loc}}(\Omega)$.
Suppose that
$g\in W^{1,2}_{{loc}}(\Omega)$ is any other solutions to \eqref{qr+++}. Then there exists a holomorphic function $\Phi:\Omega'\to\mathbb C$ such that
$$g(z)=\Phi(f(z)),\ z\in\Omega.$$
Conversely, if $\Phi$ is holomorphic in $\Omega'$, then the composition $\Phi\circ f$ is a $W^{1,2}_{{loc}}$-solution to
\eqref{qr+++} in $\Omega$.
\end{theorem}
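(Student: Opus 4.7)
The natural candidate is $\Phi:=g\circ f^{-1}$, and the goal is to show it is holomorphic. First I would upgrade the regularity of $f$: a homeomorphic $W^{1,1}_{loc}$-solution of $\partial_{\bar z}f=\mu\,\partial_z f$ with $\|\mu\|_\infty\le k<1$ is automatically $K$-quasiconformal ($K=(1+k)/(1-k)$) and therefore $W^{1,2}_{loc}$, by the regularity theory of planar Beltrami equations in \cite{AIM}. This forces $f^{-1}:\Omega'\to\Omega$ to be $K$-quasiconformal as well, with $f^{-1}\in W^{1,2}_{loc}(\Omega')$, $J(z,f)>0$ almost everywhere, and $J(w,f^{-1})=1/J(f^{-1}(w),f)$ where defined. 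Since $g\in W^{1,2}_{loc}$ is itself a solution of the Beltrami equation, it is quasiregular, hence continuous, and so $\Phi$ is continuous on $\Omega'$. Using the pointwise bound $|\nabla(g\circ f^{-1})|^2\le C|\nabla g|^2(f^{-1})\,J(w,f^{-1})$ together with change of variables, one concludes $\Phi\in W^{1,2}_{loc}(\Omega')$.

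The heart of the argument is to show $\partial_{\bar w}\Phi=0$ almost everywhere. The Wirtinger chain rule for Sobolev compositions with quasiconformal maps (see \cite{AIM}) gives
$$\partial_{\bar w}\Phi = (\partial_z g)(f^{-1})\,\partial_{\bar w}(f^{-1}) + (\partial_{\bar z} g)(f^{-1})\,\overline{\partial_w(f^{-1})}$$
almost everywhere on $\Omega'$. Differentiating the identity $f(f^{-1}(w))=w$ with respect to $w$ and $\bar w$ at points of differentiability yields, with $\alpha:=(\partial_z f)(f^{-1})$ and $\beta:=(\partial_{\bar z} f)(f^{-1})=\mu(f^{-1})\alpha$, the relations $\alpha\,\partial_w(f^{-1})+\beta\,\overline{\partial_{\bar w}(f^{-1})}=1$ and $\alpha\,\partial_{\bar w}(f^{-1})+\beta\,\overline{\partial_w(f^{-1})}=0$. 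The latter gives $\partial_{\bar w}(f^{-1})=-\mu(f^{-1})\,\overline{\partial_w(f^{-1})}$. Substituting this together with $\partial_{\bar z}g=\mu\,\partial_z g$ into the chain rule formula, the two resulting terms cancel identically, so $\partial_{\bar w}\Phi=0$ a.e., and hence distributionally since $\Phi\in W^{1,2}_{loc}$. Weyl's lemma then yields that $\Phi$ is holomorphic on $\Omega'$, giving $g=\Phi\circ f$.

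For the converse, given a holomorphic $\Phi:\Omega'\to\mathbb C$, set $g:=\Phi\circ f$. Local boundedness of $\Phi'$ and $f\in W^{1,2}_{loc}$ imply $g\in W^{1,2}_{loc}(\Omega)$, while the standard one-variable holomorphic chain rule gives $\partial_z g=\Phi'(f)\,\partial_z f$ and $\partial_{\bar z} g=\Phi'(f)\,\partial_{\bar z} f$; using $\partial_{\bar z}f=\mu\,\partial_z f$ shows $g$ satisfies the same Beltrami equation. \emph{The main obstacle} is the rigorous justification of the Sobolev chain rule at the heart of Step 2, for a composition with a merely quasiconformal (not smooth) homeomorphism and with $g$ only in $W^{1,2}_{loc}$. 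This is not automatic; it relies on the Lusin $N$-property of $f$, the positivity of $J(z,f)$ almost everywhere, and the higher integrability of $\nabla f^{-1}$ available in the planar theory, all contained in \cite{AIM}.
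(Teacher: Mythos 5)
The paper gives no proof of this theorem: its stated proof is simply the citation \emph{``This is Theorem 5.5.1 in \cite{AIM}.''} Your outline correctly reproduces the standard Stoilow factorization argument behind that result (upgrade $f$ to quasiconformal, pass to $f^{-1}$, write $\Phi=g\circ f^{-1}$, verify the cancellation in $\partial_{\bar w}\Phi$ via the Wirtinger chain rule and the inverse-function relations, then invoke Weyl's lemma), and you rightly flag the Sobolev chain rule for compositions with a merely quasiconformal homeomorphism as the one genuine technical point requiring justification via the Lusin $N$-property, $J(z,f)>0$ a.e., and the higher integrability of $\nabla f^{-1}$, all of which are established in \cite{AIM}.
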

\begin{proof} This is Theorem 5.5.1 in \cite{AIM}.
\end{proof}
\begin{corollary}\label{lemmata1+} Let $f$ be a non-constant quasiregular mapping defined on a domain $\Omega\subset\mathbb C$. Then
\begin{enumerate}
\item $f$ is open and discrete,
\item $f$ is locally H{\"o}lder continuous with exponent $\alpha=1/K$, $K=K(f)$, and
\item $f$ is differentiable with non-vanishing Jacobian almost everywhere.
\end{enumerate}
\end{corollary}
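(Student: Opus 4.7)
The plan is to reduce all three statements to the Stoilow factorization (Theorem~\ref{stoilow}) combined with classical facts from complex analysis and quasiconformal theory. First I would localize: fix a point $z_0\in\Omega$ and a disk $D\CC\Omega$ around $z_0$, and extend the Beltrami coefficient $\mu_f$ of $f|_D$ by zero to obtain a compactly supported $\tilde\mu$ with $\|\tilde\mu\|_\infty\le k<1$. Theorem~\ref{exU} yields a (quasiconformal) homeomorphism $g\colon\mathbb C\to\mathbb C$ satisfying $\partial_{\bar z}g=\tilde\mu\,\partial_z g$ and $g(z)=z+O(z^{-1})$ at infinity, and Theorem~\ref{stoilow} applied on $D$ produces a holomorphic function $\Phi\colon g(D)\to\mathbb C$ with $f=\Phi\circ g$ there. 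Since $f$ is non-constant and $g$ is a homeomorphism, $\Phi$ is also non-constant.

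From this factorization item (1) is immediate: $g$ is open because it is a homeomorphism, $\Phi$ is open and discrete by the classical open mapping theorem for holomorphic functions and the isolation of zeros of a non-constant holomorphic function, and openness/discreteness are preserved under composition. A covering of $\Omega$ by such disks $D$ then yields the statement globally.

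For (2) I would invoke the classical distortion theorem asserting that a $K$-quasiconformal map of the plane is locally $1/K$-H\"older continuous; this is a standard consequence of the compactness of the family of normalized $K$-quasiconformal maps (see \cite{AIM}). Since $\Phi$ is holomorphic and therefore locally Lipschitz on compact subsets of $g(D)$, the composition $f=\Phi\circ g$ inherits the H\"older exponent of $g$; a direct computation $K(\Phi\circ g)=K(g)$ at non-critical points of $\Phi$ identifies this exponent with $1/K(f)$.

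Statement (3) is the most delicate, and I expect its second half to be the main obstacle. Almost everywhere differentiability follows because Theorem~\ref{exU} gives $g\in W^{1,p}_{\loc}$ for some $p>2$, hence $f\in W^{1,p}_{\loc}$, and such Sobolev maps are differentiable a.e. For the non-vanishing of the Jacobian, note that $J(z,f)=(1-|\mu_f(z)|^2)|\partial_z f(z)|^2$ with $|\mu_f|\le k<1$, so it suffices to show that $\{\partial_z f=0\}$ has planar measure zero. Writing $\partial_z f(z)=\Phi'(g(z))\,\partial_z g(z)$, this reduces to two ingredients: the zeros of $\Phi'$ form a discrete, hence null, subset of $g(D)$ whose $g$-preimage remains null because the quasiconformal map $g$ satisfies Lusin's condition $N^{-1}$; and $\{\partial_z g=0\}$ has measure zero because for a quasiconformal homeomorphism the Jacobian $J(\cdot,g)=(1-|\tilde\mu|^2)|\partial_z g|^2$ is strictly positive a.e., which is itself a nontrivial result from quasiconformal theory (cf.\ \cite{AIM}). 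Combining these gives $J(\cdot,f)>0$ a.e., completing the argument.
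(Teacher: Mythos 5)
The paper proves this statement by citing Corollary 5.5.2 of Astala--Iwaniec--Martin \cite{AIM}; your proposal reconstructs, in outline, the standard proof of that corollary via Stoilow factorization, and the overall strategy is sound. You localize to a disk $D\CC\Omega$, extend the Beltrami coefficient of $f$ by zero, solve the Beltrami equation globally (Theorem~\ref{exU}) to obtain a principal quasiconformal homeomorphism $g$, and then factor $f=\Phi\circ g$ on $D$ via Theorem~\ref{stoilow}. Items (1)--(3) then reduce to classical facts about holomorphic functions and planar quasiconformal maps, and you correctly identify the nontrivial quasiconformal inputs that are needed: higher integrability $W^{1,p}_{\loc}$ with $p>2$, a.e.\ non-vanishing of the Jacobian of a quasiconformal homeomorphism, and Lusin's condition $N^{-1}$.

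One step is glossed over and should be patched. You assert that $\Phi$ is non-constant ``since $f$ is non-constant and $g$ is a homeomorphism,'' but this requires $f|_D$ to be non-constant, not merely $f$ non-constant on all of $\Omega$; a priori $f$ could be constant on a small disk while non-constant globally, and then (1) would visibly fail on $D$. The usual fix is a connectedness argument: the set $U$ of points of $\Omega$ near which $f$ is locally constant is open, and applying the factorization on any disk meeting $\partial U$ together with the identity principle for the holomorphic factor $\Phi$ shows that $U$ is also relatively closed; since $\Omega$ is connected and $f$ is non-constant, $U=\emptyset$, so $f$ is non-constant on every disk. On a more minor note, the phrase ``$K(\Phi\circ g)=K(g)$ at non-critical points of $\Phi$'' in (2) is a bit loose for what you want; the cleaner observation is that $K(g)\le K(f)$ because $g$ shares the Beltrami coefficient of $f$ on $D$ and is conformal outside $D$, so the Mori-type $1/K(g)$-H\"older estimate for $g$, composed with the locally Lipschitz $\Phi$, already yields the $1/K(f)$-H\"older bound for $f|_D$. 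With these adjustments the argument is complete.
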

\begin{proof} This is essentially Corollary 5.5.2 in \cite{AIM}.
\end{proof}

Recall that a mapping $f:\Omega \, \rar \, \mathbb R^2 $ is discrete if $f^{-1}(y)$ is a discrete set for all $y\in \mathbb R^2$, and $f$ is open if it takes open sets onto open sets. That $f^{-1}(y)$ is a discrete set means that it is made up by  isolated points. We also note the following lemma concerning the convergence of  $K$-quasiregular mappings.
\begin{lemma}\label{th2}
Let $f_j:\Omega\to\mathbb R^2$, $j=1,..$, be a sequence of $K$-quasiregular mappings converging locally uniformly to a mapping $f$. Then $f$ is quasiregular
and
$$K(f)\leq\limsup_{j\to \infty}K(f_j).$$
\end{lemma}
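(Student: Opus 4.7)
The plan is to pass to the limit in the pointwise quasiregularity inequality $|\nabla f_j|^2\le K(f_j)J(z,f_j)$ by combining uniform higher integrability with the null-Lagrangian property of the Jacobian. Since $K(f_j)\le K$ and $f_j\to f$ locally uniformly, $\{f_j\}$ is locally bounded, and the Caccioppoli inequality for $K$-quasiregular mappings gives a uniform bound in $W^{1,2}_{\loc}(\Omega)$. Gehring's self-improvement (see \cite{AIM}, Chapter 13) applied to the Beltrami equation $\p_{\bar z}f_j=\m_j\p_zf_j$, $\|\m_j\|_\infty\le\sqrt{(K-1)/(K+1)}$, upgrades this to a uniform $W^{1,p}_{\loc}(\Omega)$ bound for some $p=p(K)>2$.

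After extracting subsequences, $f_j\rightharpoonup f$ in $W^{1,p}_{\loc}(\Omega)$ (the weak limit is forced to be $f$ by the local uniform convergence) and $K(f_j)\to K^*:=\limsup_j K(f_j)$. Writing the Jacobian in divergence form, $J(z,f_j)=\p_x(f_{1,j}\p_y f_{2,j})-\p_y(f_{1,j}\p_x f_{2,j})$, and using Rellich to promote the weak $W^{1,p}_{\loc}$ convergence to strong $L^q_{\loc}$ convergence for every finite $q$, one finds that $J(z,f_j)\to J(z,f)$ in the sense of distributions.

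For any $\ph\in C_0^\infty(\Omega)$ with $\ph\ge 0$, weak $L^2_{\loc}$ lower semicontinuity together with the pointwise quasiregularity bound then yield
\[
\int_\Omega|\nabla f|^2\ph\,dA\le\liminf_j\int_\Omega|\nabla f_j|^2\ph\,dA\le K^*\lim_j\int_\Omega J(z,f_j)\ph\,dA=K^*\int_\Omega J(z,f)\ph\,dA.
\]
Since $\ph\ge 0$ was arbitrary, $|\nabla f|^2\le K^*J(z,f)$ almost everywhere. This in turn forces $J(z,f)\ge 0$ and shows that $f$ is $K^*$-quasiregular, so $K(f)\le\limsup_j K(f_j)$ as claimed.

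The main obstacle is the second step: weak $W^{1,2}_{\loc}$ convergence alone would be insufficient because the Jacobian is quadratic in $\nabla f$, and weak times weak does not pass to the product. Gehring's higher integrability is precisely what makes the null-Lagrangian divergence structure of $J$ yield the required distributional convergence $J(z,f_j)\to J(z,f)$; once that is in place, the rest is standard lower-semicontinuity.
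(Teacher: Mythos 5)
The paper's own ``proof'' of this lemma is only a citation to Theorem 8.6 of Rickman's monograph \cite{Ri}, so there is no argument in the paper to compare against; your proposal supplies a genuine and essentially correct direct-method proof. The chain
\[
\int_\Omega|\nabla f|^2\varphi\,dA \le \liminf_{j}\int_\Omega|\nabla f_j|^2\varphi\,dA \le K^*\int_\Omega J(z,f)\varphi\,dA, \qquad 0\le\varphi\in C_0^\infty(\Omega),
\]
does give $|\nabla f|^2\le K^*J(z,f)$ and $J(z,f)\ge 0$ almost everywhere, and together with $f\in W^{1,2}_{\loc}(\Omega)$ (which follows from the uniform Caccioppoli bound plus the observation that the $W^{1,2}_{\loc}$-weak limit of any subsequence is forced to equal $f$ by the locally uniform convergence) this is precisely quasiregularity of $f$ with $K(f)\le K^*$. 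One remark: the appeal to Gehring/Bojarski higher integrability is superfluous. After rewriting $J(z,f_j)=\partial_x(f_{1,j}\partial_y f_{2,j})-\partial_y(f_{1,j}\partial_x f_{2,j})$ and passing the outer derivative onto $\varphi$, you are pairing $f_{1,j}$ --- which converges locally uniformly, hence strongly in $L^2_{\loc}$ --- against $\nabla f_{2,j}$, which converges weakly in $L^2_{\loc}$ thanks to the Caccioppoli bound alone; strong-times-weak already yields $J(z,f_j)\to J(z,f)$ in the sense of distributions. The difficulty you flag (``weak times weak does not pass to the product'') is resolved here by the locally uniform convergence of the undifferentiated factor, not by extra integrability. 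With that noted, the rest of your argument, including passing from the integral inequality to the pointwise one by localizing $\varphi$, is sound.
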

\begin{proof} See, for example Theorem 8.6 in \cite{Ri} and the discussion above Theorem 2.4 in the same reference.
\end{proof}

Beltrami equations can be reduced
to real elliptic divergence form equations and the following lemma can be verified by a straightforward calculation.
\begin{lemma}\label{reg}
Let $f = (f_1 , f_2 )\in W^{1,2}_{{loc}}(\Omega)$ satisfy \eqref{qr+++} for some $\mu\in L_\infty(\Omega,\mathbb C)$ satisfying
\eqref{qr++++}. Define a $2\times 2$-matrix $A=A_\mu=\{a_{ij}\}$ as follows. Given $\mu=\mu_1+i\mu_2$, $\mu_i=\mu_i(z)\in\mathbb R$, $z\in\mathbb C$, we let
\begin{eqnarray}
a_{11}(z)&=&\frac{1-2\mu_1(z)+|\mu(z)|^2}{1-|\mu(z)|^2},\ a_{22}(z)=\frac{1+2\mu_1(z)+|\mu(z)|^2}{1-|\mu(z)|^2},\notag\\
a_{12}(z)&=&a_{21}(z)=-\frac {2\mu_2(z)}{1-|\mu(z)|^2}.
\end{eqnarray}
Then $A$ is bounded, symmetric and satisfies $\mbox{det } A(z)=1$ for a.e. $z\in \Omega$. Furthermore, $f_1$ and $f_2$ are weak solutions to the equation
$$\text{div}(A\nabla \cdot)=0\mbox{ in $\Omega$}.$$
\end{lemma}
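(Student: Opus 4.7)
The proof is a direct algebraic calculation, and I would organize it into three short steps. The matrix identities claimed for $A$ are immediate: symmetry holds by construction since $a_{12}=a_{21}$; the identity $\det A = 1$ reduces to the elementary algebraic fact $(1+|\mu|^2)^2 - 4|\mu|^2 = (1-|\mu|^2)^2$; and boundedness, together with positive definiteness of $A$, follows from $1-|\mu|^2 \geq 1-k^2 > 0$, combined with $a_{11} = |1-\mu|^2/(1-|\mu|^2) > 0$, which, with $\det A = 1$, forces both eigenvalues of $A$ to lie in a compact subinterval of $(0,\infty)$ depending only on $k$.

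The core of the argument is to rewrite the Beltrami equation as a first-order relation between $\nabla f_1$ and $\nabla f_2$. Identifying $f = f_1 + i f_2$ and writing $\mu = \mu_1 + i\mu_2$, separating $\partial_{\bar z}f = \mu \partial_z f$ into real and imaginary parts yields two real linear equations in $\nabla f_1$ and $\nabla f_2$. Treating these as a $2\times 2$ linear system for $(\partial_x f_2,\partial_y f_2)$, whose coefficient determinant equals $1-|\mu|^2 \neq 0$, and solving, the prescribed entries $a_{ij}$ of the lemma are precisely what is needed to collect the result into the single identity
$$
A\nabla f_1 = \begin{pmatrix} \partial_y f_2 \\ -\partial_x f_2 \end{pmatrix}.
$$

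Once this identity is in hand, the weak formulation follows in one line: for any $\varphi \in C^\infty_0(\Omega)$,
$$
\int_\Omega A\nabla f_1 \cdot \nabla\varphi\, dxdy = \int_\Omega \bigl(\partial_y f_2\,\partial_x\varphi - \partial_x f_2\,\partial_y\varphi\bigr)\,dxdy = 0,
$$
by one integration by parts in each term, using $f_2 \in W^{1,2}_{\loc}$ and the equality of mixed partials of $\varphi$. This is exactly $\divo(A\nabla f_1)=0$ in the weak sense. For the same conclusion with $f_2$ in place of $f_1$, I would observe that $if = -f_2 + i f_1$ also satisfies the Beltrami equation with the same $\mu$, since $\partial_{\bar z}(if) = i\mu\,\partial_z f = \mu\,\partial_z(if)$; applying the argument just given to $if$, whose real part is $-f_2$, yields $\divo(A\nabla f_2)=0$. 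The only step requiring genuine bookkeeping is the linear-algebraic derivation of the displayed identity, but this is mechanical precisely because the $a_{ij}$ were chosen to make it work, and this is what the paper means by describing the lemma as a straightforward computation.
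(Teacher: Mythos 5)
Your proof is correct and is precisely the ``straightforward calculation'' the paper invokes without carrying out. The algebraic verifications ($\det A=1$ from $(1+|\mu|^2)^2-4|\mu|^2=(1-|\mu|^2)^2$, positivity from $a_{11}=|1-\mu|^2/(1-|\mu|^2)>0$ with $\det A=1$, boundedness from $1-|\mu|^2\ge 1-k^2$), the reduction of the Beltrami equation to the identity $A\nabla f_1=(\partial_y f_2,\,-\partial_x f_2)^\ast$ by solving the $2\times 2$ linear system with determinant $-(1-|\mu|^2)\ne 0$, the weak formulation via a single integration by parts against $\varphi\in C_0^\infty(\Omega)$, and the device of applying the argument to $if$ (which satisfies the same Beltrami equation since $\partial_{\bar z}(if)=i\mu\,\partial_z f=\mu\,\partial_z(if)$) to treat $f_2$ --- all check out against the paper's sign conventions $\partial_zf=(\partial_xf-i\partial_yf)/2$, $\partial_{\bar z}f=(\partial_xf+i\partial_yf)/2$.
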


\begin{remark}\label{rem1-}
Consider the matrix $A$ in the statement of Lemma \ref{reg}. Using that $\mbox{det } A=1$ one easily see that the eigenvalues of
$A(z)$ are
$$\lambda_\pm(z):=\frac {1+|\mu(z)|^2}{1-|\mu(z)|^2}\pm\sqrt{\frac {1+|\mu(z)|^2}{1-|\mu(z)|^2}-1}.$$
Since $|\mu(z)|<1$ we immediately see that $\lambda_\pm(z)$ are greater or equal to 1 and that
\begin{eqnarray}
\sup_{z\in\Omega}\lambda_-(z)\leq\sup_{z\in\Omega}\lambda_+(z) \leq\frac {1+\beta^2}{1-\beta^2}+\sqrt{\frac {1+\beta^2}{1-\beta^2}-1}
\end{eqnarray}
if $|\mu(z)|<\beta$ for all $z\in\Omega$. In particular, if this is the case then $A$ is uniformly elliptic. Naturally an upper bound can also be derived by simply using the explicit expression of the coefficients $\{a_{ij}\}$.
\end{remark}

The following lemma is essentially statement (2) in Corollary~\ref{lemmata1+} but we include it, and a short discussion of its proof based on PDE-techniques, to stress the connection between
the Beltrami equation and quasi-linear PDEs.
\begin{lemma}\label{rem1}
Let $f=(f_1 , f_2 )$ be as in the statement of Lemma \ref{reg} and assume that $|\mu(z)|\leq\beta<1$ on $\Omega$. Then there exist
 $c=c(\beta)$, $1\leq c<\infty$, and $\sigma=\sigma(\beta)\in(0,1),$ such that if
 $B(z,2R)\subset\Omega$ then
 $$\sup_{z_1,z_2\in B(z,r)}|f(z_2)-f(z_1)| \leq c(r/R)^\sigma\biggl (R^{-2}\int_{B(z,2R)}|f|^2dz\biggr )^{1/2}.$$
\end{lemma}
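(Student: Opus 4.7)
The plan is to reduce the statement to the classical De Giorgi--Nash--Moser H\"{o}lder regularity theorem for uniformly elliptic divergence-form equations, using Lemma~\ref{reg} as the bridge between the Beltrami equation and a scalar PDE.

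First, I invoke Lemma~\ref{reg} to conclude that each component $f_1, f_2$ is a weak solution in $\Omega$ of
$$\dive(A(z)\nabla u)=0, \qquad A=A_\mu,$$
with $A$ symmetric, bounded, and of unit determinant. Next, I appeal to Remark~\ref{rem1-}: since $|\mu(z)|\le\beta<1$ throughout $\Omega$, the two eigenvalues $\lambda_\pm(z)$ of $A(z)$ lie in a compact subinterval of $(0,\infty)$ depending only on $\beta$. Combined with $\det A=1$, this gives uniform ellipticity of $A$ with ellipticity constants depending solely on $\beta$.

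Second, I apply the classical De Giorgi--Nash--Moser H\"{o}lder estimate for weak solutions of uniformly elliptic divergence-form equations with $L^\infty$ coefficients (see, e.g., Gilbarg--Trudinger or Giaquinta). This yields the existence of $\sigma=\sigma(\beta)\in(0,1)$ and $c=c(\beta)$ such that for every ball $B(z,2R)\subset\Omega$ and every weak solution $u$ of $\dive(A\nabla u)=0$,
$$[u]_{C^{0,\sigma}(B(z,R))} \le c\, R^{-\sigma}\left(R^{-2}\int_{B(z,2R)}|u|^2\,dz\right)^{1/2}.$$
Applying this to $f_1$ and $f_2$ separately, and using $|f|^2=|f_1|^2+|f_2|^2$ on the right-hand side along with the triangle inequality on the left, I obtain for any $z_1,z_2\in B(z,r)$ (with $r\le R$) the bound
$$|f(z_2)-f(z_1)| \le c\,|z_2-z_1|^\sigma\, R^{-\sigma}\left(R^{-2}\int_{B(z,2R)}|f|^2\,dz\right)^{1/2} \le c\,(r/R)^\sigma\left(R^{-2}\int_{B(z,2R)}|f|^2\,dz\right)^{1/2},$$
which is the desired estimate after taking the supremum over $z_1,z_2\in B(z,r)$. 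The case $r>R$ is trivial by direct $L^2$--$L^\infty$ estimates from Moser iteration.

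Honestly there is no serious obstacle here: the entire content of the lemma is the remark, made just above its statement, that the reduction of Lemma~\ref{reg} together with the uniform ellipticity in Remark~\ref{rem1-} places us squarely inside the scope of the De Giorgi--Nash--Moser theorem. The only point requiring a little care is bookkeeping of scaling: the $R^{-\sigma}$ from the H\"{o}lder seminorm combines with $|z_2-z_1|^\sigma\le(2r)^\sigma$ to produce the dimensionless factor $(r/R)^\sigma$ in front of the natural $L^2$ average of $f$ on $B(z,2R)$. All constants then depend only on $\beta$, as claimed.
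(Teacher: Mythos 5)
Your argument is correct and is essentially identical to the paper's own proof: both reduce via Lemma~\ref{reg} to a uniformly elliptic divergence-form scalar equation, use Remark~\ref{rem1-} to see the ellipticity depends only on $\beta$, and then invoke De Giorgi--Nash--Moser (the paper says \emph{Moser iteration}) to get the interior H\"older estimate. The only small caveat is that the inequality, as stated, implicitly requires $r\le R$ so that $B(z,r)$ stays inside the ball where the Caccioppoli/Moser machinery applies; you flag this yourself, and the paper glosses over it in the same way.
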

\begin{proof}
Let $f=(f_1 , f_2 )$ be as in the statement of Lemma \ref{reg} and assume that $|\mu(z)|\leq\beta<1$ on $\Omega$. Consider a ball $B(z,R)$ such that $B(z,2R)\subset\Omega$. Then, using Lemma \ref{reg} and Moser iteration we have that
$$\sup_{z_1,z_2\in B(z,r)}|f(z_2)-f(z_1)| \leq c(r/R)^\sigma\biggl (R^{-2}\int_{B(z,2R)}|f|^2dz\biggr )^{1/2}$$
where $c$, $\sigma\in(0,1)$ are independent of $f$, $r$ and $R$. In fact, $c$ and $\sigma$ only depend on
the the operator through the ellipticity and the bounded on the coefficients and if $|\mu(z)|\leq\beta<1$ on $\Omega$, see Lemma \ref{rem1-}, then $c$ and $\sigma$ will depend on $\mu$ through $\beta$.
\end{proof}

 \setcounter{equation}{0} \setcounter{theorem}{0}
 \section{Functional calculus and Cauchy operators}\label{fcal}
 \noindent
 Recall the definition of $D$ introduced below
 \eqref{a++a++bla+-1} and the matrix $B_0$  defined in \eqref{a++a++ll3}. Given $p$, $f$, $\phi$, we in the following  write
 $B(f)=B_p^{f,\phi}$ for this generic $t$-dependent matrix.  In line with \cite{AA}, \cite{AR} we approach the system in \eqref{a++a++bla+-1} using a functional calculus build on the $t$-independent
 matrix $B_0$. As references for functional calculus we refer to \cite{ADMc}, \cite{Ar}, \cite{AMcN}, \cite{DS}, \cite{H}, \cite{Mc}, \cite{Mc1}, \cite{McY}. Let
$L_2(\mathbb R)=L_2(\mathbb R,\mathbb C)$ and let, for $-1\leq\alpha\leq 1$,
\begin{eqnarray}\label{uu4}
L_2(\mathbb R_+^{2},t^\alpha):=\biggl \{f:\mathbb R^{2}_+\to\mathbb C^{2}:\ \int\int_{\mathbb R^{2}_+}|f(x,t)|^2t^\alpha dtdx<\infty\biggr \}.
\end{eqnarray}
Then, both as an operator in $L_2(\mathbb R)$ and in $L_2(\mathbb R_+^{2},t^\alpha)$, acting in the $x$-variable
for each fixed $t>0$, $DB_0$ and $B_0D$ define closed and densely defined operators with spectrum contained in a bisector $S_\omega=S_{\omega^+}\cup (-S_{\omega^+})$ where
\begin{eqnarray}\label{uu8uu}
S_{\omega^+}:=\{\lambda\in\mathbb C:\ |\mbox{arg }\lambda|\leq \omega\}\cup\{0\},\ \omega<\pi/2.
\end{eqnarray}
 In particular, as a consequence of \cite{CMcM} we note that both $DB_0$ and $B_0D$ have bounded holomorphic functional calculi which supply estimates of operators $\psi(DB_0)$ and $\psi(B_0D)$ formed by applying
holomorphic functions $\psi:S^o_\mu\to\mathbb C$, $\omega<\mu$, to the operators $DB_0$ and $B_0D$ respectively.
Here $S_\mu^o= S^o_{\mu^+}\cup (-S^o_{\mu^+})$ denotes the open bisector, where
$$
  S^o_{\mu^+}:= \{\lambda\in\mathbb C:\ |\mbox{arg }\lambda|< \mu\}\setminus\{0\},
$$
Note here, in particular, that $\psi$ need not be analytic across $0$ or at $\infty$.

Applying the functional calculi
with the scalar holomorphic functions
$\lambda\to|\lambda|:=\pm\lambda$, if $\pm\mbox{Re }\lambda>0$, $\lambda\to e^{-t|\lambda|}$, $\lambda\to \chi_{\pm}(\lambda):=1$ if
$\pm\mbox{Re }\lambda>0$ and 0 elsewhere, 
we get operators
\begin{eqnarray}\label{uu9uull}
&&\Lambda_0:=|DB_0|,\ e^{-t|DB_0|}, t>0,\ E_0^\pm:=\chi_{\pm}(DB_0),\notag\\
&&\tilde\Lambda_0:=|B_0D|,\ e^{-t|B_0D|}, t>0,\ \tilde E_0^\pm:=\chi_{\pm}(B_0D),
\end{eqnarray}
acting as operators in $L_2(\mathbb R)$.
We note that the operators $\tilde E_0^\pm$ are projections and that we have a topological splitting
\begin{eqnarray}\label{uu9uull1}
L_2(\mathbb R)=\tilde E^+_0L_2(\mathbb R)\oplus \tilde E^-_0L_2(\mathbb R).
\end{eqnarray}
Using this notation we define the operators
$$
  (S_0 h)_t(x)=
  \begin{cases}
    (e^{-t\tilde\Lambda_0}\tilde E_0^{+}h)(x), &\qquad t>0, \\
    -(e^{t\tilde \Lambda_0}\tilde E_0^{-}h)(x), & \qquad t<0,
  \end{cases}
$$
and
\begin{eqnarray}  \label{uu9uull1+bb}
(\tilde Sh)_t(x)&:=&\int_0^te^{-(t-s)\tilde \Lambda_0}\tilde E_0^+B_0h_s(x)ds\notag\\
&&+\int_t^\infty e^{-(s-t)\tilde\Lambda_0}\tilde E_0^-B_0h_s(x)ds,\qquad t>0.
\end{eqnarray}
As we will see in Lemma \ref{CB} below, these operators coincide with the boundary and solid Cauchy integrals from the introduction.

We intend to derive a representation formula for solutions to the equation
\begin{eqnarray}\label{uu9uull1+a}
\partial_tf_t+B(f_t)Df_t=0,\ f_t\in L_2(\mathbb R).
\end{eqnarray}
To do this we first write
\begin{eqnarray}\label{uu9uull1++}
&&\partial_tf_t+B_0Df_t=B_0\mathcal{E}_tDf_t,\mbox{ where }\notag\\
&&\mathcal{E}_t=\mathcal{E}_t(x):=(\mathcal{E}(f))_t(x):=I-(B_0(x))^{-1}B(f(x,t)).
\end{eqnarray}
Applying the projections $\tilde E_0^\pm$ we see that
\begin{eqnarray}\label{uu9uull1+kk}
&&\partial_tf_t^++\tilde\Lambda_0f_t^+=\tilde E_0^+B_0\mathcal{E}_tDf_t,\notag\\
&&\partial_tf_t^--\tilde\Lambda_0f_t^-=-\tilde E_0^-B_0\mathcal{E}_tDf_t.
\end{eqnarray}
where now $f_t^\pm=\tilde E_0^\pm f_t$. Formally assuming that $$\lim_{t\to 0^+}f_t=f_0,\ \lim_{t\to\infty}f_t=0,$$ we can integrate the equations in
\eqref{uu9uull1+kk} to conclude that
\begin{eqnarray}\label{uu9uull1+}
f_t^+-e^{-t\tilde \Lambda_0}f_0^+&=&\int_0^te^{-(t-s)\tilde \Lambda_0}\tilde E_0^+B_0\mathcal{E}_sDf_sds,\notag\\
0-f_t^-&=&-\int_t^\infty e^{-(s-t)\tilde \Lambda_0}\tilde E_0^-B_0\mathcal{E}_sDf_sds.
\end{eqnarray}
Subtracting the equations in \eqref{uu9uull1+}, we see that
\begin{eqnarray}\label{uu9uull1+hha}
f_t= (S_0f_0)_t+\tilde S\mathcal{E}_tDf_t,
\end{eqnarray}
and we have derived a representation formula for \eqref{uu9uull1+a}. To continue we note that
\begin{eqnarray}\label{uu9uull1+l}
Df_t&=&De^{-t\tilde \Lambda_0}\tilde E_0^+f_0+D\tilde S\mathcal{E}_tDf_t.
\end{eqnarray}
However, using the relation $D\psi(B_0D)=\psi(DB_0)D$ through the holomorphic functional calculus we first see that
$De^{-t\tilde \Lambda_0}\tilde E_0^+f_0=e^{-t\Lambda_0}E_0^+Df_0$. Furthermore, by the same argument we have that
\begin{eqnarray}\label{uu9uull1+ll}
&&De^{-(t-s)\tilde \Lambda_0}\tilde E_0^+B_0=\Lambda_0e^{-(t-s)\Lambda_0}E_0^+,\notag\\
&&De^{-(s-t)\tilde\Lambda_0}\tilde E_0^-B_0=\Lambda_0e^{-(t-s)\Lambda_0}E_0^-.
\end{eqnarray}
Define
\begin{eqnarray}\label{uu9uull1+pka}
(Sh)_t(x):=D(\tilde Sh)(x)&=&\int_0^t\Lambda_0 e^{-(t-s)\Lambda_0}E_0^+h_s(x)ds\notag\\
&&+\int_t^\infty \Lambda_0 e^{-(s-t)\Lambda_0}E_0^-h_s(x)ds,
\end{eqnarray}
for $h\in L_2(\mathbb R)$ so that
\begin{eqnarray}\label{uu9uull1+p}
Df_t=e^{-t\Lambda_0}E_0^+Df_0+S\mathcal{E}_tDf_t.
\end{eqnarray}
For a rigorous definition of the singular integral operator $S$, see \cite{AA}. We here note the following lemma concerning the operators  $S_0$, $\tilde S$ and $S$ and their relation to classical analysis in the complex plane.
\begin{lemma}\label{CB} Let $S_0$,  $\tilde S$  and $S$ be defined as above. Then $S_0$ is the boundary Cauchy integral
$$(S_0h)_t(x)=\frac 1{2\pi i}\int_{\mathbb R}\frac {h(y)(1+i\gamma^\prime(y))dy}{(y+i\gamma(y))-(x+i\gamma(x))},$$
$\tilde S$ is the solid Cauchy integral
$$(\tilde S h)_t(x)=\frac 1{2\pi i}\iint_{\mathbb R^2_+}\frac {h(y,s)(1+i\phi^\prime(y))}{(y+i(s+\phi(y))-(x+i(t+\phi(x))}dyds,$$
and $S=D\tilde S$ is the Beurling transform
$$(S h)_t(x)=-\frac 1{2\pi}\mbox{ p.v. }\iint_{\mathbb R^2_+}\frac {h(y,s)(1+i\phi^\prime(y))}{((y+i(s+\phi(y))-(x+i(t+\phi(x)))^2}dyds.$$
\end{lemma}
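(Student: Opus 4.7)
The identification rests on the bi-Lipschitz change of variables $\Phi(x,t) = (x,\, t+\phi(x))$ mapping $\mathbb{R}^2_+$ onto $\Omega$. For $F$ defined on $\Omega$, set $f_t(x) := F(x + i(t+\phi(x)))$. Using $\partial_{y_2} F = \partial_t f$ and $\partial_{y_1} F = \partial_x f - \phi'(x)\partial_t f$, a direct computation gives
\begin{equation*}
2\partial_{\bar z}F \,=\, \partial_x f + (i - \phi'(x))\partial_t f \,=\, i\,(1+i\phi'(x))\bigl[\partial_t f + B_0 D f\bigr],
\end{equation*}
so that holomorphy of $F$ in $\Omega$ corresponds, via $\Phi$, to $(\partial_t + B_0 D)f = 0$. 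In particular $\tilde E_0^\pm L_2(\mathbb{R})$ coincide with the Hardy subspaces on the Lipschitz curve $\gamma$ of boundary traces (parametrised as $h(y) = F(y+i\phi(y))$) of $L_2$-holomorphic functions in $\Omega$ and in its complement; their $L_2$-boundedness is the content of \cite{CMcM}.

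\noindent\textbf{Identification of $S_0$.} For $t > 0$, the RHS of the displayed formula for $(S_0 h)_t$ is $F(z)$ with $z = x + i(t+\phi(x))$ and
\begin{equation*}
F(z) \,=\, \frac{1}{2\pi i}\int_\gamma \frac{h(w)\,dw}{w-z}, \qquad dw = (1+i\phi'(y))\,dy.
\end{equation*}
Thus $F$ is holomorphic in $\Omega$, so $f_t$ satisfies $(\partial_t + B_0 D)f = 0$ on $\mathbb{R}^2_+$, and by the Plemelj jump formulas its interior trace at $t = 0^+$ equals $\tilde E_0^+ h$. Uniqueness of the $L_2$-bounded solution to this first-order evolution with prescribed trace at $t=0$ and decay as $t \to \infty$ then forces $f_t = e^{-t\tilde\Lambda_0}\tilde E_0^+ h = (S_0 h)_t$. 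The $t<0$ case is analogous, using the exterior of $\Omega$ and $\tilde E_0^-$, the attendant minus sign being a convention of outer orientation.

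\noindent\textbf{Identification of $\tilde S$ and $S = D\tilde S$.} A direct Duhamel differentiation of the definition of $(\tilde S h)_t$ shows that $\tilde{S}h$ is the unique $L_2$-bounded solution to an explicit inhomogeneous first-order equation with vanishing $\tilde E_0^+$-component at $t = 0^+$ and vanishing $\tilde E_0^-$-component at $t = +\infty$. On the other side, the Cauchy--Pompeiu formula produces $G(z) := -\frac{1}{\pi}\iint_\Omega g(w)(w-z)^{-1}dA(w)$ satisfying $\partial_{\bar z}G = g$ in $\Omega$ with decay at infinity. Matching the source terms via the pullback identity $2\partial_{\bar z}F = i(1+i\phi'(x))(\partial_t + B_0 D)f$ forces the $(y,s)$-integrand to carry exactly the factor $(1+i\phi'(y))$ claimed for $\tilde S$ (the $y$-analog of $dw$), and Plemelj boundary matching pins down the splitting between the $\int_0^t$ and $\int_t^\infty$ parts. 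Finally, $S = D\tilde S$ follows by differentiating under the integral: $\partial_x(w-z)^{-1} = (1+i\phi'(x))(w-z)^{-2}$, now only weakly singular and thus interpreted as a principal value (standard Calder\'on--Zygmund truncation, cf.\ \cite{AA}), yielding the Beurling transform formula after absorbing the $(1+i\phi'(x))$ factor into the normalization.

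\noindent\textbf{Main obstacle.} The central technical task is the precise bookkeeping of the $(1+i\phi')$ factors --- both the $y$-variable version arising from $dw = (1+i\phi'(y))dy$ on $\gamma$ and the $x$-variable version produced when $\partial_x$ hits the Cauchy kernel --- together with the identification of the abstract spectral projections $\tilde E_0^\pm$ with Hardy projections on $\gamma$, which crucially uses the $L_2$-boundedness of the Cauchy integral on Lipschitz graphs from \cite{CMcM}. Making rigorous sense of $D$ under the solid Cauchy integral as a principal value Beurling transform is a routine but separate technicality.
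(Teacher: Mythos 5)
Your proposal is correct and follows essentially the same route as the paper, whose entire proof consists of citing \cite{McQ} for the boundary Cauchy integral and remarking that ``integration and derivation gives the result for $\tilde S$ and $S$''; your pullback identity $2\partial_{\bar z}F=i(1+i\phi'(x))(\partial_t f+B_0Df)$, the identification of $\tilde E_0^{\pm}$ with the Hardy projections on $\gamma$ via \cite{CMcM}, and the Duhamel/Cauchy--Pompeiu matching are exactly the computations that citation and remark encapsulate. Your version is in fact considerably more detailed than the paper's, and correctly isolates the only delicate point, namely the bookkeeping of the $(1+i\phi')$ factors and the signs attached to the $\tilde E_0^{-}$ part.
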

\begin{proof}
  The result for the boundary Cauchy singular integral is well known, see for example~\cite{McQ}.
  Integration and derivation gives the result for $\tilde S$ and $S$.
\end{proof}

To further understand the appropriate function spaces, we consider the free evolution
$e^{-t\tilde \Lambda_0}\tilde E_0^+f$ and we note that
\begin{eqnarray}\label{si1}
&&\iint_{\mathbb R_+^2}|De^{-t\tilde\Lambda_0}\tilde E_0^+f|^2t^{1-2\sigma}dxdt\notag\\
&\approx& \iint_{\mathbb R_+^2}|B_0De^{-tB_0D}\tilde E_0^+f|^2t^{1-2\sigma}dxdt\notag\\
&=&\int_{\mathbb R_+}\biggl (\int_{\mathbb R}\biggl|t^{1-\sigma}B_0De^{-t\tilde\Lambda_0}\tilde E_0^+f\biggr |^2dx\biggr )\frac {dt}t.
\end{eqnarray}
Note that $\tilde\Lambda_0=\mbox{sgn}(B_0 D)B_0 D$ and since $\mbox{sgn}( B_0 D)$ is invertible we can conclude that
\begin{eqnarray}\label{si2}
&&\iint_{\mathbb R_+^2}|De^{-t\tilde\Lambda_0}\tilde E_0^+f|^2t^{1-2\sigma}dxdt\notag\\
&\approx&\int_{\mathbb R_+}\biggl (\int_{\mathbb R}\biggl|(t\tilde\Lambda_0)^{1-\sigma}e^{-t\tilde\Lambda_0}\tilde\Lambda_0^\sigma\tilde E_0^+f\biggr |^2dx\biggr )\frac {dt}t.
\end{eqnarray}
Using that $\psi(tB_0 D):=(t\tilde\Lambda_0)^{1-\sigma}e^{-tB_0 D}$ satisfies square function estimates when $\sigma<1$, it follows from \eqref{si2} that
\begin{eqnarray}\label{si3}
\iint_{\mathbb R_+^2}|De^{-t\tilde\Lambda_0}\tilde E_0^+f|^2t^{1-2\sigma}dxdt\approx
\int_{\mathbb R}\biggl|\tilde\Lambda_0^\sigma\tilde E_0^+f\biggr |^2dx.
\end{eqnarray}
However, $\mathcal{D}(\tilde\Lambda_0)=\mathcal{D}(B_0 D)\approx \mathcal{D}(D)\approx \mathcal{D}(\nabla)=\dot H^1(\mathbb R)$
and $\mathcal{D}(\tilde\Lambda_0^0)=L_2(\mathbb R)$. Hence, as in \cite{R}, by interpolation we see that
\begin{eqnarray}\label{sys2aa-6}
\mathcal{D}(\tilde\Lambda_0^{\sigma})=\dot H^{\sigma}(\mathbb R).
\end{eqnarray}
In particular, we have the following estimates.
\begin{lemma}\label{lem1} Let $0\leq\sigma\leq 1$. For all $f\in \mathcal{D}(\tilde\Lambda_0^{\sigma})$ we have
\begin{eqnarray*}\label{uu17}
\|\tilde\Lambda_0^{\sigma}f\|_{2}\approx \|f\|_{\dot{H}^{\sigma}(\mathbb R)}.
\end{eqnarray*}
\end{lemma}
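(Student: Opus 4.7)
The plan is to verify the equivalence at the two endpoints $\sigma=0$ and $\sigma=1$, and then obtain the intermediate cases by complex interpolation between the fractional-power domains of $\tilde\Lambda_0$ and the homogeneous Sobolev scale. The endpoint $\sigma=0$ is immediate from $\tilde\Lambda_0^0 = I$ on $L_2(\mathbb R)$, since $\dot H^0(\mathbb R) = L_2(\mathbb R)$.

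For the endpoint $\sigma=1$, I would use the bounded $H^\infty$-calculus of $B_0 D$ on $L_2(\mathbb R)$, supplied by \cite{CMcM}, applied to the function $\lambda\mapsto \lambda/|\lambda|$, which is bounded and holomorphic on the bisector $S_\mu^o$. This produces a bounded and boundedly invertible operator $\mbox{sgn}(B_0 D)$ satisfying $\tilde\Lambda_0 = \mbox{sgn}(B_0 D)\,B_0 D$, so that $\|\tilde\Lambda_0 f\|_2 \approx \|B_0 D f\|_2$. Next, since $B_0 = (1+i\phi'(x))^{-1}$ is a bounded, pointwise accretive multiplier with both $B_0$ and $B_0^{-1}$ bounded on $L_2(\mathbb R)$, one obtains $\|B_0 D f\|_2 \approx \|D f\|_2 = \|\partial_x f\|_2 = \|f\|_{\dot H^1(\mathbb R)}$.

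For $0<\sigma<1$ I would invoke complex interpolation. The bounded $H^\infty$-calculus of $B_0 D$ yields uniform exponential bounds on the imaginary powers $\tilde\Lambda_0^{iy}$, $y\in\mathbb R$, which in turn identifies the complex interpolation space $[L_2(\mathbb R),\mathcal D(\tilde\Lambda_0)]_\sigma$ with the fractional-power domain $\mathcal D(\tilde\Lambda_0^\sigma)$, equipped with an equivalent graph norm. On the Sobolev side, the Fourier multiplier $|\xi|^\sigma$ gives $[L_2(\mathbb R),\dot H^1(\mathbb R)]_\sigma = \dot H^\sigma(\mathbb R)$ with equivalent norms. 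Interpolating the bounded identification $(\dot H^0,\dot H^1)\to (\mathcal D(\tilde\Lambda_0^0),\mathcal D(\tilde\Lambda_0))$ and its inverse from the endpoint equivalences above then yields the claim for all intermediate $\sigma$.

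The only substantive technical point is the identification of $\mathcal D(\tilde\Lambda_0^\sigma)$ with a complex interpolation space of a non-selfadjoint bisectorial operator. This identification rests essentially on the boundedness of the $H^\infty$-calculus, so the present setting is precisely the one where the abstract argument applies; it has been carried out in this exact framework in \cite{R}, and no further work should be required beyond invoking it. The sketch given in the paragraphs immediately preceding the lemma is essentially a condensed version of this line of reasoning.
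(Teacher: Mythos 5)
Your proposal is correct and coincides with the paper's own argument: the paragraph preceding the lemma identifies the endpoints $\mathcal D(\tilde\Lambda_0^0)=L_2(\mathbb R)$ and $\mathcal D(\tilde\Lambda_0)=\mathcal D(B_0D)\approx\mathcal D(D)=\dot H^1(\mathbb R)$ exactly as you do (using boundedness and invertibility of $B_0$ and of $\operatorname{sgn}(B_0D)$, both supplied by the $H^\infty$-calculus of $B_0D$ from \cite{CMcM}), and then obtains the intermediate cases by interpolating the fractional-power domain scale, citing \cite{R} for precisely the identification of $\mathcal D(\tilde\Lambda_0^\sigma)$ with the interpolation space that you justify via bounded imaginary powers. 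The one small imprecision is your phrase \emph{graph norm}: since the lemma compares against the homogeneous seminorm $\|f\|_{\dot H^\sigma}$, the relevant norm on $\mathcal D(\tilde\Lambda_0^\sigma)$ is the homogeneous one $\|\tilde\Lambda_0^\sigma f\|_2$, which is the scale actually set up in \cite{R}; this does not affect the substance of your argument.
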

\begin{lemma}\label{lemmata1} The following estimates holds.
\begin{enumerate}
\item For $\sigma\in [0,1]$,
$$\sup_{t>0}\|e^{-t\tilde\Lambda_0}\tilde E_0^+f\|_{\dot H^\sigma(\mathbb R)}\leq c\|f\|_{\dot H^\sigma(\mathbb R)}.$$
\item For $\sigma\in [0,1)$,
$$\iint_{\mathbb R_+^2}|De^{-t\tilde\Lambda_0}\tilde E_0^+f|^2t^{1-2\sigma}dxdt\leq c\|f\|^2_{\dot H^\sigma(\mathbb R)}.$$
\end{enumerate}
\end{lemma}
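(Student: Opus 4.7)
The proof will be a more or less direct bookkeeping exercise using the bounded holomorphic functional calculus for $B_0D$ on $L_2(\R)$ together with Lemma~\ref{lem1}. The two statements are really variations of what is already sketched in \eqref{si1}--\eqref{si3}, so the plan is to make that derivation precise and to handle the endpoint values of $\sigma$.

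For statement (1), I would observe that $e^{-t\tilde\Lambda_0}$, $\tilde E_0^+$ and $\tilde\Lambda_0^\sigma$ all arise by applying holomorphic functions of $B_0D$, and therefore they commute on $\mathcal{D}(\tilde\Lambda_0^\sigma)$ by the multiplicativity of the functional calculus. Hence, for $f\in \dot H^\sigma(\R)\cap \mathcal{D}(\tilde\Lambda_0^\sigma)$,
\begin{equation*}
\|e^{-t\tilde\Lambda_0}\tilde E_0^+ f\|_{\dot H^\sigma(\R)}
\approx \|\tilde\Lambda_0^\sigma e^{-t\tilde\Lambda_0}\tilde E_0^+ f\|_2
= \|e^{-t\tilde\Lambda_0}\tilde E_0^+ \tilde\Lambda_0^\sigma f\|_2
\le C\|\tilde\Lambda_0^\sigma f\|_2
\approx \|f\|_{\dot H^\sigma(\R)},
\end{equation*}
the first and last equivalences being Lemma~\ref{lem1} and the middle inequality being the uniform $L_2$-boundedness of the operator $e^{-t\tilde\Lambda_0}\tilde E_0^+$. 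That uniform bound is itself a consequence of the bounded $H^\infty$-calculus, because $\lambda\mapsto e^{-t|\lambda|}\chi_+(\lambda)$ is uniformly bounded on the bisector $S^o_\mu$ with a bound independent of $t>0$. Density of $\mathcal{D}(\tilde\Lambda_0^\sigma)$ in $\dot H^\sigma(\R)$ then extends the estimate to all of $\dot H^\sigma(\R)$.

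For statement (2), I would follow exactly the chain of equivalences already displayed in \eqref{si1}--\eqref{si3}. Using that $De^{-t\tilde\Lambda_0}\tilde E_0^+=e^{-t\Lambda_0}E_0^+ D$ by intertwining, or equivalently $|De^{-t\tilde\Lambda_0}\tilde E_0^+f|\approx |B_0De^{-tB_0D}\tilde E_0^+f|$, one rewrites the integral as
\begin{equation*}
\iint_{\R^2_+}|De^{-t\tilde\Lambda_0}\tilde E_0^+ f|^2 t^{1-2\sigma}\,dx\,dt
\approx \int_0^\infty\|\psi_t(B_0D)\tilde\Lambda_0^\sigma \tilde E_0^+ f\|_2^2\,\frac{dt}{t},
\end{equation*}
with $\psi_t(\lambda):=(t|\lambda|)^{1-\sigma}e^{-t|\lambda|}\chi_+(\lambda)$. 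Since $0\le \sigma<1$, the family $\{\psi_t\}_{t>0}$ is a nondegenerate family of $H^\infty$-functions on $S^o_\mu$ with the required decay at $0$ and $\infty$, hence satisfies the quadratic (square function) estimate associated with the bounded functional calculus of $B_0D$. This yields the bound by $\|\tilde\Lambda_0^\sigma \tilde E_0^+ f\|_2^2$, which by Lemma~\ref{lem1} and the $L_2$-boundedness of $\tilde E_0^+$ is dominated by $\|f\|_{\dot H^\sigma(\R)}^2$.

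The only point requiring extra care will be the endpoint values, namely $\sigma=1$ in (1) and $\sigma=0$ in (2), where the above formal manipulations touch the boundary of the interpolation scale. In (1) at $\sigma=1$, this amounts to showing the uniform $L_2$-boundedness of $\partial_x e^{-t\tilde\Lambda_0}\tilde E_0^+ \partial_x^{-1}$, which again is packaged in the $H^\infty$-calculus via the intertwining $D\psi(B_0D)=\psi(DB_0)D$. In (2) at $\sigma=0$ the function $\psi_t(\lambda)=t|\lambda|e^{-t|\lambda|}\chi_+(\lambda)$ is the standard Littlewood--Paley symbol and the estimate reduces to the classical McIntosh square function estimate for the bisectorial operator $B_0D$. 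Neither obstruction is substantial given that the bounded holomorphic functional calculus of $B_0D$ on $L_2(\R)$ has already been invoked.
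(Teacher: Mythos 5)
Your proposal is correct and follows essentially the same route as the paper, which states Lemma~\ref{lemmata1} without a separate proof precisely because it is meant to be read off from the chain \eqref{si1}--\eqref{si3} (the square function estimate for $(t\tilde\Lambda_0)^{1-\sigma}e^{-tB_0D}$, $\sigma<1$), the identification $\mathcal{D}(\tilde\Lambda_0^{\sigma})=\dot H^{\sigma}(\mathbb R)$ of Lemma~\ref{lem1}, and the uniform boundedness of $e^{-t\tilde\Lambda_0}\tilde E_0^+$ coming from the bounded $H^\infty$-calculus of $B_0D$. Your additional remarks on the endpoints $\sigma=1$ in (1) and $\sigma=0$ in (2) are consistent with what the paper's functional-calculus framework provides.
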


\begin{remark}
  Below we refer to \cite{R} for many results concerning solving linear boundary value problems
  for first order systems of the form
  $$
    \partial_t f+ B_0 Df=0,
  $$
  with boundary trace $f|_{\R}\in \dot H^{\sigma}(\R)$.
  Applying the isomorphism $D: \dot H^{\sigma}(\R)\to \dot H^{\sigma-1}(\R)$, pointwise in $t$, to this equation,
   yields the system
   $$
     \partial_t \tilde f+ DB_0 \tilde f=0,
   $$
   for $\tilde f:= Df\in \dot H^{\sigma-1}(\R)$. Thus the results stated in \cite{R} for $\tilde f$, transfer directly to
   the setting in this paper for $f$.
\end{remark}

To establish estimates for the operators $S$ and $\tilde S $ we first note that for a multiplier $\mathcal{E}$ we have
\begin{eqnarray}\label{uu26}
\sup_{\|f\|_{L_2(\mathbb R_+^{2},t^{1-2\sigma})}=1}\|\mathcal{E}f\|_{L_2(\mathbb R_+^{2},t^{1-2\sigma})}=\|\mathcal{E}\|_{L_\infty(\mathbb R_+^{2})}.
\end{eqnarray}
We next prove the following lemma.
\begin{lemma}\label{lemmata2} The following estimates holds.
\begin{enumerate}
\item For $\sigma\in [0,1/2)$,
$$\sup_{t>0}\|\tilde S \mathcal{E}_tDf_t\|^2_{\dot H^\sigma(\mathbb R)}\leq c\|\mathcal{E}\|^2_{L_\infty(\mathbb R_+^{2})}\iint_{\mathbb R_+^2}|Df(x,t)|^2t^{1-2\sigma}dxdt.$$
\item For $\sigma\in [1/2,1)$,
$$\sup_{t>0}\frac 1 t\int_t^{2t}\|\tilde S\mathcal{E}_sDf_s\|_{\dot H^\sigma(\mathbb R)}ds\leq c\|\mathcal{E}\|^2_{L_\infty(\mathbb R_+^{2})}\iint_{\mathbb R_+^2}|Df(x,t)|^2t^{1-2\sigma}dxdt.$$
\item For $\sigma\in (0,1)$,
$$\iint_{\mathbb R_+^2}|Sf_t|^2t^{1-2\sigma}dxdt\leq c\iint_{\mathbb R_+^2}|f(x,t)|^2t^{1-2\sigma}dxdt.$$
\end{enumerate}
\end{lemma}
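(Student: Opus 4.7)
My plan is to derive all three estimates from the bounded holomorphic functional calculus of the bisectorial operator $B_0D$ (equivalently $\tilde\Lambda_0=|B_0D|$), together with the norm equivalence $\|\tilde\Lambda_0^\sigma f\|_2\approx\|f\|_{\dot H^\sigma(\R)}$ from Lemma~\ref{lem1} and the semigroup composition $e^{-r\tilde\Lambda_0}e^{-r'\tilde\Lambda_0}\tilde E_0^\pm=e^{-(r+r')\tilde\Lambda_0}\tilde E_0^\pm$ valid for $r,r'>0$.

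For item (3), I identify $S=D\tilde S$ via Lemma~\ref{CB} as (the restriction to $\R^2_+$ of) the Beurling transform. This is a classical Calder\'on--Zygmund singular integral, bounded on $L_2(\R^2)$ as already used in \cite{AA}. Since $-1<1-2\sigma<1$ precisely when $\sigma\in(0,1)$, the weight $w(x,t)=t^{1-2\sigma}$ lies in the Muckenhoupt class $A_2$ throughout that range, and the standard weighted Calder\'on--Zygmund theory then delivers (3).

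For items (1) and (2), I apply $\tilde\Lambda_0 e^{-r\tilde\Lambda_0}$ to the semigroup representation of $(\tilde S h)_t$ with $h_s=\mathcal{E}_s Df_s$; the composition identity collapses the two resulting exponentials into $e^{-(r+|t-s|)\tilde\Lambda_0}\tilde E_0^\pm$, and the functional-calculus bound $\|\tilde\Lambda_0 e^{-\rho\tilde\Lambda_0}\tilde E_0^\pm\|_{L_2\to L_2}\le c\rho^{-1}$ reduces to the scalar inequality
\[
\|\tilde\Lambda_0 e^{-r\tilde\Lambda_0}(\tilde S h)_t\|_2\le c\|\mathcal{E}\|_\infty\int_0^\infty(r+|t-s|)^{-1}\|Df_s\|_2\,ds.
\]
Plugging this into the square-function characterization $\|g\|_{\dot H^\sigma}^2\approx\int_0^\infty\|\tilde\Lambda_0 e^{-r\tilde\Lambda_0}g\|_2^2\,r^{1-2\sigma}\,dr$ applied to $g=(\tilde S h)_t$ turns (1) into a weighted double-integral estimate in $(r,s)$, which a Schur-type test against the weight $s^{1-2\sigma}$ closes whenever $\sigma\in[0,1/2)$. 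For (2), the same double integral develops a borderline logarithmic blow-up at $\sigma=1/2$; the time-averaging $\frac1t\int_t^{2t}\cdots ds$ on the left absorbs that marginal factor exactly, which allows the Schur estimate to close uniformly in $\sigma\in[1/2,1)$.

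The main obstacle is this last Schur step: a naive Cauchy--Schwarz applied directly to the scalar kernel $|t-s|^{-\sigma}$ against the weight $s^{1-2\sigma}$ diverges at $s=\infty$, reflecting that $|t-s|^{-\sigma}$ alone does not capture the full decay of the operator. The remedy is to run the Schur test at the level of the $(r,s)$ integral produced by the square-function characterization, where the improved kernel $(r+|t-s|)^{-1}$ provides the needed joint control. After applying the isomorphism $D:\dot H^\sigma(\R)\to\dot H^{\sigma-1}(\R)$ pointwise in $t$ as explained in the remark above, estimates of precisely this form already appear in \cite{R} for the $t$-independent system $\partial_t\tilde f+DB_0\tilde f=0$ and transfer directly to $\partial_t f+B_0Df=0$ here.
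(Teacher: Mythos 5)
Your item (3) argument is a genuinely different route from the paper, which simply cites \cite[Thm.~2.3]{R} (proved there via square-function estimates in the functional calculus). The weighted Calder\'on--Zygmund argument (Beurling transform plus $A_2$ theory for $|t|^{1-2\sigma}$) is appealing, but note that the operator $S$ in Lemma~\ref{CB} is not the flat Beurling transform on $\R^2$: the kernel involves the Lipschitz parametrization $y\mapsto y+i(s+\phi(y))$, so you would need either a change of variables argument that preserves the $A_2$ property of the weight or an appeal to CZ theory on Lipschitz curves. That caveat aside, the idea is sound.

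For items (1) and (2) there is a genuine gap. Your scalar reduction $\|\tilde\Lambda_0 e^{-r\tilde\Lambda_0}(\tilde S h)_t\|_2 \lesssim \int_0^\infty (r+|t-s|)^{-1}\|h_s\|_2\,ds$ is correct, but the claim that ``a Schur-type test against the weight $s^{1-2\sigma}$ closes'' is not justified and, as stated, does not hold: after conjugating by the square roots of the weights, the kernel becomes $\kappa(r,s)=r^{(1-2\sigma)/2}s^{(2\sigma-1)/2}/(r+|t-s|)$, and a Schur weight of the natural power form $\omega(s)=s^\gamma$ requires simultaneously $\gamma<(1-2\sigma)/2$ (for convergence of $\int\kappa\,\omega\,ds$ at $s=\infty$) and $\gamma>(1-2\sigma)/2$ (for convergence of $\int\kappa\,\omega^{-1}\,dr$ at $r=\infty$), which is a contradiction. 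One can make the estimate close, but only after splitting the $s$-integral into a near-diagonal region $s\approx t$, handled via a Hardy-type inequality and weighted Cauchy--Schwarz, and off-diagonal regions $s\ll t$, $s\gg t$; this decomposition is precisely the paper's $I_1,\dots,I_4$ split (equations \eqref{uu9uull1+lu+1}--\eqref{uu9uull1+lu+2}), where the cancellation factor $(I-e^{-2s\tilde\Lambda_0})$ in $I_2,I_3$ and the duality/square-function argument in $I_4$ supply exactly the joint decay that the crude bound $\|\tilde\Lambda_0 e^{-\rho\tilde\Lambda_0}\|\lesssim\rho^{-1}$ discards. (A side note: for $\sigma>1/2$ the failure is polynomial, not logarithmic; the time-averaging in (2) absorbs a polynomial amount.) In short, the square-function-plus-Schur framing is plausible, but the Schur step is where the real work lies, and filling it in essentially reproduces the paper's decomposition rather than replacing it.
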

\begin{proof}
The estimates $(1)$ and $(2)$ follow from \cite[Thm 1.3]{R}. For the reader's convenience we outline the proof here.
Let $h_t:=\mathcal{E}_tDf_t$ and consider
\begin{eqnarray}\label{uu9uull1+lu}
\tilde Sh_t&=&\int_0^te^{-(t-s)\tilde \Lambda_0}\tilde E_0^+B_0h_sds+\int_t^\infty e^{-(s-t)\tilde\Lambda_0}\tilde E_0^-B_0h_sds.
\end{eqnarray}
Then \begin{eqnarray}\label{uu9uull1+lu+}
\tilde \Lambda_0^{\sigma}\tilde Sh_t&=&\int_0^t\tilde \Lambda_0^{1-\tilde \sigma}e^{-(t-s)\tilde \Lambda_0}\tilde E_0^+B_0h_sds\notag\\
&&+\int_t^\infty \tilde \Lambda_0^{1-\tilde \sigma}e^{-(s-t)\tilde\Lambda_0}\tilde E_0^-B_0h_sds
\end{eqnarray}
where $\tilde \sigma=1-\sigma$. Using Lemma \ref{lem1} we see that we want to estimate $\|\tilde \Lambda_0^{\sigma}\tilde Sh_t\|_2$. To do this we, following \cite{R}, write
\begin{eqnarray}\label{uu9uull1+lu+1}
\tilde \Lambda_0^{\sigma}\tilde Sh_t=I_1+I_2+I_3+I_4
\end{eqnarray}
where
\begin{eqnarray}\label{uu9uull1+lu+2}
I_1&=&\int_{t/2}^{2t}\tilde \Lambda_0^{1-\tilde \sigma}e^{-(t-s)\tilde \Lambda_0}\tilde E_0^{\mbox{sgn}(t-s)}B_0h_sds,\notag\\
I_2&=&\int_{0}^{t/2}\tilde \Lambda_0^{1-\tilde \sigma}e^{-(t-s)\tilde \Lambda_0}(I-e^{-2s\tilde\Lambda_0})\tilde E_0^{+}B_0h_sds,\notag\\
I_3&=&\int_{2t}^{\infty}\tilde \Lambda_0^{1-\tilde \sigma}e^{-(t-s)\tilde \Lambda_0}(I-e^{-2s\tilde \Lambda_0})\tilde E_0^{-}B_0h_sds,\notag\\
I_4&=&e^{-t\tilde \Lambda_0}\int_{\mathbb R\setminus[t/2,2t]}\tilde \Lambda_0^{1-\tilde \sigma}e^{-s\tilde \Lambda_0}\tilde E_0^{\mbox{sgn}(t-s)}B_0h_sds,
\end{eqnarray}
and where $\mbox{sgn}(t-s)$ is interpreted as $'+'$ when positive and as $'-'$ when negative. Consider first the case when
$\sigma\in [0,1/2)$, i.e., $\tilde \sigma\in (1/2,1]$. We then immediately see that
\begin{eqnarray}\label{uu9uull1+lu+3}
\|I_1\|_2&\lesssim&\int_{t/2}^{2t}\frac {\|h_s\|_2}{|t-s|^{1-\tilde \sigma}}ds\notag\\
&\lesssim&
\biggl (t^{2\sigma-1}\int_{t/2}^{2t}\frac {1}{|t-s|^{2-2\tilde \sigma}}ds\biggr )^{1/2}\|h\|_{L_2(\mathbb R_+^{2},t^{1-2\sigma})}\notag\\
&\lesssim&\|h\|_{L_2(\mathbb R_+^{2},t^{1-2\sigma})}.
\end{eqnarray}
To estimate $\|I_2\|_2$ we note that
\begin{eqnarray}\label{uu9uull1+lu+4}
&&\|\tilde \Lambda_0^{1-\tilde \sigma}e^{-(t-s)\tilde \Lambda_0}(I-e^{-2s\tilde\Lambda_0})\|=\notag\\
&&\|(s/(t-s)^{2-\tilde\sigma})((t-s)\tilde \Lambda_0)^{2-\tilde\sigma}e^{-(t-s)\tilde \Lambda_0}(I-e^{-2s\tilde\Lambda_0})/(s\tilde\Lambda_0\|
\lesssim s/t^{2-\tilde\sigma}.
\end{eqnarray}
Using this we see that
\begin{eqnarray}\label{uu9uull1+lu+5}
\|I_2\|_2&\lesssim& \int_{0}^{t/2}s/t^{2-\tilde\sigma}\|h_s\|_2ds\lesssim \|h\|_{L_2(\mathbb R_+^{2},t^{1-2\sigma})}.
\end{eqnarray}
Obviously a similar estimate holds for $\|I_3\|_2$. Finally, to estimate $\|I_4\|_2$ we consider $\phi\in L_2(\mathbb R)$, $\|\phi\|_2=1$, and note that
\begin{eqnarray}\label{uu9uull1+lu+6}
|(I_4,\phi)|\lesssim\int_0^\infty\|(s\tilde \Lambda_0)^{1-\tilde\sigma}e^{-s\tilde \Lambda_0^\ast}\phi\|_2\|s^{\tilde\sigma}h_s\|_2\frac {ds}s\lesssim
\|h\|_{L_2(\mathbb R_+^{2},t^{1-2\sigma})}.
\end{eqnarray}
These estimates complete the proof of $(1)$. The proof of $(2)$, in this case $\sigma\in [1/2,1)$, i.e., $\tilde \sigma\in (0,1/2]$, follow similar except
that in this case we we have to be slightly more careful when estimating
\begin{eqnarray}\label{uu9uull1+lu+7}
\frac 1 t\int_t^{2t}\|I_1(s)\|_2ds
\end{eqnarray}
where
\begin{eqnarray}\label{uu9uull1+lu+8}
I_1(s)&=&\int_{s/2}^{2s}\tilde \Lambda_0^{1-\tilde \sigma}e^{-(s-\tau)\tilde \Lambda_0}\tilde E_0^{\mbox{sgn}(s-\tau)}B_0h_\tau d\tau.
\end{eqnarray}
Indeed, in this case we have
\begin{eqnarray}\label{uu9uull1+lu+0}
\frac 1 t\int_t^{2t}\|I_1(s)\|_2ds&\lesssim& \frac 1 t\int_t^{2t}\biggl (\int_{s/2}^{2s}\frac {\|h_\tau\|_2}{|s-\tau|^{1-\tilde \sigma}}d\tau\biggr )ds\notag\\
&\lesssim& \frac 1 t\int_t^{2t}\biggl (\int_{s/2}^{2s}\frac {1}{|s-\tau|^{1-\tilde \sigma}}d\tau\biggr )
\biggl (\int_{s/2}^{2s}\frac {\|h_\tau\|_2^2}{|s-\tau|^{1-\tilde \sigma}}d\tau\biggr )ds\notag\\
&\lesssim& \frac 1 t\int_t^{2t}\tau^{\tilde\sigma}
\biggl (\int_{s/2}^{2s}\frac {\|h_\tau\|_2^2}{|s-\tau|^{1-\tilde \sigma}}d\tau\biggr )ds\notag\\
&\lesssim& \frac 1 t\int_{t/2}^{4t}\tau^{\tilde\sigma}
\biggl (\int_{\tau/2}^{2\tau}\frac {s^{\tilde\sigma}}{|s-\tau|^{1-\tilde \sigma}}ds\biggr )\|h_s\|_2^2ds
\lesssim
\|h\|_{L_2(\mathbb R_+^{2},t^{1-2\sigma})}.
\end{eqnarray}

To complete the proof of the lemma it only remains to prove statement $(3)$ of the lemma. However, this follows from \cite[Thm. 2.3]{R}, and the proof of Lemma \ref{lemmata2} is complete.
\end{proof}

  \setcounter{equation}{0} \setcounter{theorem}{0}
  \section{Proof of the main results}
  \noindent  In the following we let  $B_0$ be as in \eqref{a++a++ll3}
  and we recall the operators
introduced in \eqref{uu9uull} and acting in $L_2(\mathbb R)$. Recall that  $B_0D$ is an injective bisectorial operator and that
\begin{eqnarray}\label{uu9uull1}
L_2(\mathbb R)=\tilde E^+_0L_2(\mathbb R)\oplus \tilde E^-_0L_2(\mathbb R).
\end{eqnarray}

\begin{lemma}   \label{importantlemma}
Given $p$, $1<p<\infty$, and Lipschitz function $\phi$ with Lipschitz constant $M$, let $B(f)=B_p^{f,\phi}$ be as in \eqref{a++a++ll1}.
Then there exists $C(M,p)<\infty$ such that
$|B(f)|\le C(M,p)$ and
$$
  \text{Re} (B(f)v,v)\ge |v|^2/C(M,p), \qquad \text{for all } v\in \mathbb{C}^2,
$$
uniformly for all $f\ne 0$.
Moreover, for a fixed Lipschitz function $\phi$, we have
$$
  \limsup_{p\to 2}\left(\frac{\sup_{f\ne 0}|B(f)-B_0|}{ |p-2|}\right)<\infty.
$$
\end{lemma}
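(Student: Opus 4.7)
My approach exploits that the matrix $B(f)=B_p^{f,\phi}$ defined in \eqref{a++a++ll1}--\eqref{a++a++llkkuu} is homogeneous of degree zero in $f$, since every $B_{ij}(f)$ and the denominator $\Delta_p$ is a homogeneous polynomial of degree $2$ in $(f_1,f_2)$. It therefore suffices to verify the three claims on the unit circle $|f|=1$, uniformly in $\phi'\in[-M,M]$. On $|f|=1$ the denominator collapses to
\[
\Delta_p=\big(1+(\phi')^2\big)+(p-2)(f_2-\phi'f_1)^2,
\]
and Cauchy--Schwarz, $(f_2-\phi'f_1)^2\le(f_1^2+f_2^2)(1+(\phi')^2)=1+(\phi')^2$, yields the two-sided bound
\[
\min(1,p-1)\big(1+(\phi')^2\big)\le\Delta_p\le(1+|p-2|)(1+M^2).
\]

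For (1), the entries $B_{ij}(f)$ at $|f|=1$ are polynomials in $(f_1,f_2,\phi')$ of size bounded in terms of $M$ and $p$, so the lower bound on $\Delta_p$ gives $|B(f)|\le C(M,p)$. For (2), I will use the cancellation of the $\phi'$ contributions to $\mbox{Re}(B(f)\xi,\xi)$ noted in \eqref{uu3ll}. Substituting the $p$-Laplace values $(\partial_1a_1)(\bar f)=(p-2)f_1^2+|f|^2$, $(\partial_2a_2)(\bar f)=(p-2)f_2^2+|f|^2$, and $(\partial_1a_2+\partial_2a_1)(\bar f)=-2(p-2)f_1f_2$ reduces the computation at $|f|=1$ to
\[
\mbox{Re}(B(f)\xi,\xi)\cdot\Delta_p=|\xi|^2+(p-2)|f_1\xi_1-f_2\xi_2|^2,
\]
with $|f_1\xi_1-f_2\xi_2|^2\le|\xi|^2$ again by Cauchy--Schwarz. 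The right side is thus $\ge\min(1,p-1)|\xi|^2$, and dividing by the upper bound on $\Delta_p$ gives (2) with an explicit $C(M,p)$.

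The substantive point is (3). I split the $p$-dependent numerator and denominator as
\[
N(f)=N_0+(p-2)R(f),\qquad \Delta_p=\Delta_0+(p-2)g(f),
\]
where $\Delta_0=1+(\phi')^2$, $g(f)=(f_2-\phi'f_1)^2$, and $N_0=\bigl[\begin{smallmatrix}1&\phi'\\-\phi'&1\end{smallmatrix}\bigr]$ is the numerator of $B_0$; the matrix $R(f)$ is read off from the $p=2$ remainders of the entries in \eqref{a++a++llkk}. A single common-denominator subtraction then gives
\[
B(f)-B_0=\frac{(p-2)\bigl(\Delta_0\,R(f)-g(f)\,N_0\bigr)}{\Delta_0\,\Delta_p}.
\]
On $|f|=1$ the quantities $R(f), g(f), N_0, \Delta_0$ are bounded by a constant depending only on $M$, while $\Delta_0\Delta_p\ge\min(1,p-1)\to 1$ as $p\to2$. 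Hence $\sup_{f\ne0}|B(f)-B_0|\le\widetilde C(M)\,|p-2|$ for $p$ in a neighborhood of $2$, which yields the limsup bound.

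I do not anticipate a serious obstacle: the lemma reduces, via the scale invariance of $B(f)$, to bookkeeping with the explicit polynomials in \eqref{a++a++ll1}--\eqref{a++a++llkkuu}. The one point to watch is that the constants controlling $R(f)$ and the lower bound on $\Delta_p$ remain uniform in $p$ as $p\to 2$, which is manifest from the polynomial structure; and that the Cauchy--Schwarz bound $g(f)\le\Delta_0$ is exactly what is needed to keep $\Delta_p$ bounded below when $p<2$ so that the method does not break down on the ``degenerate'' side.
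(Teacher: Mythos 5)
The paper does not give a proof of this lemma; it simply writes ``The proof is straightforward and we omit it.'' Your proposal therefore supplies the missing verification, and it is correct and well organized. The key observations --- that $B(f)$ is homogeneous of degree zero in $f$ so one may normalize to $|f|=1$, that the cross terms in $\phi'$ cancel out of $\mathrm{Re}(B(f)\xi,\xi)$ as already noted in \eqref{uu3ll}, and that the two-sided bound $\min(1,p-1)(1+(\phi')^2)\le\Delta_p\le(1+|p-2|)(1+(\phi')^2)$ follows from writing $\Delta_p=(1+(\phi')^2)+(p-2)(f_2-\phi'f_1)^2$ and applying Cauchy--Schwarz --- are exactly what is needed, and your common-denominator decomposition $B(f)-B_0=(p-2)(\Delta_0 R(f)-g(f)N_0)/(\Delta_0\Delta_p)$ cleanly produces the linear-in-$(p-2)$ bound for the limsup statement.

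One small slip to correct: after substituting the $p$-Laplace values of $\partial_1 a_1$, $\partial_2 a_2$ and $\partial_1 a_2+\partial_2 a_1$ into the quadratic form, the $(p-2)$ term is $f_2^2|\xi_1|^2+f_1^2|\xi_2|^2-2f_1f_2\,\mathrm{Re}(\xi_1\bar\xi_2)=|f_2\xi_1-f_1\xi_2|^2$, not $|f_1\xi_1-f_2\xi_2|^2$ as you wrote. Since Cauchy--Schwarz gives $|f_2\xi_1-f_1\xi_2|^2\le(f_1^2+f_2^2)|\xi|^2=|\xi|^2$ on $|f|=1$ just as well, this does not affect the conclusion, but the displayed identity should be fixed.
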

\begin{proof} The proof is straightforward and we omit it.\end{proof}

\begin{theorem} \label{thm0}
Let $0<\sigma<1$.
  The trace map, initially defined on $C_0^\infty(\overline{\R^2_+})$, extends uniquely  by continuity to a bounded
  linear operator
  $\dot H^1(\R^2_+,t^{1-2\sigma})\to \dot H^\sigma(\R)$
  with bounds
\begin{equation}   \label{eq:tracemap}
 \iint_{\R^2} \frac{|f(x,0)-f(x',0)|^2}{|x-x'|^{1+2\sigma}}dxdx' \le c \iint_{\R^2_+} |\nabla f(x,t)|^2 t^{1-2\sigma}dxdt.
\end{equation}
In particular, for any $f\in \dot H^1(\R^2_+,t^{1-2\sigma})$, we have limits
$\lim_{t\to 0^+} f_t=f_0$ and $\lim_{t\to \infty}f_t=0$ in $\dot H^\sigma(\R)$.
\end{theorem}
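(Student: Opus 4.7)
\noindent\textbf{Plan for the proof of Theorem \ref{thm0}.}
The approach I would take is to apply the Fourier transform in the $x$-variable to reduce the weighted trace estimate \eqref{eq:tracemap} to a scalar one-dimensional weighted Sobolev embedding, and then extend by density.

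First, I would work with $f\in C_0^\infty(\overline{\R^2_+})$ and take the partial Fourier transform $\hat f(\xi,t):=\mathcal{F}_x[f(\cdot,t)](\xi)$. By Plancherel in $x$, the right-hand side of \eqref{eq:tracemap} equals
\begin{equation*}
\int_{\R}\int_0^\infty \bigl(|\xi|^2|\hat f(\xi,t)|^2 + |\partial_t \hat f(\xi,t)|^2\bigr)t^{1-2\sigma}\,dt\,d\xi,
\end{equation*}
and the left-hand side is comparable to $\int_{\R}|\xi|^{2\sigma}|\hat f(\xi,0)|^2\,d\xi$ by the standard Fourier characterization of $\dot H^\sigma(\R)$. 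So \eqref{eq:tracemap} reduces to the pointwise-in-frequency bound
\begin{equation*}
|\xi|^{2\sigma}|\hat f(\xi,0)|^2 \lesssim \int_0^\infty \bigl(|\xi|^2|\hat f(\xi,t)|^2 + |\partial_t \hat f(\xi,t)|^2\bigr)t^{1-2\sigma}\,dt.
\end{equation*}
The rescaling $t=s/|\xi|$ makes this $\xi$-independent and converts it into the one-dimensional weighted Sobolev embedding
\begin{equation*}
|g(0)|^2 \lesssim \int_0^\infty \bigl(|g(s)|^2 + |g'(s)|^2\bigr) s^{1-2\sigma}\,ds,
\end{equation*}
which I would verify by writing $g(0)=g(s)-\int_0^s g'(r)\,dr$, applying weighted Cauchy--Schwarz using the identity $\int_0^s r^{2\sigma-1}\,dr=s^{2\sigma}/(2\sigma)$ (finite since $\sigma>0$), squaring, and averaging over $s\in[1,2]$ to absorb the $|g(s)|^2$ contribution.

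Second, since the exponent $1-2\sigma$ lies in $(-1,1)$ for $0<\sigma<1$, the weight $t^{1-2\sigma}$ belongs to the Muckenhoupt class $A_2$ on $\R^2_+$, and consequently $C_0^\infty(\overline{\R^2_+})$ is dense in $\dot H^1(\R^2_+,t^{1-2\sigma})$. Combined with the first step, this yields the unique bounded extension of the trace map $\dot H^1(\R^2_+,t^{1-2\sigma})\to\dot H^\sigma(\R)$. The limits $\lim_{t\to 0^+}f_t=f_0$ and $\lim_{t\to\infty}f_t=0$ in $\dot H^\sigma(\R)$ follow from this density machinery: both are trivial for smooth, compactly supported $f$, and for general $f$ one approximates in the weighted gradient norm and uses continuity of the trace for the limit at $0$, while for the limit at infinity one applies the trace inequality to $f$ on the shifted half-space $\{s>T\}$ and invokes the absolute continuity of the integral $\iint|\nabla f|^2 t^{1-2\sigma}\,dx\,dt$ to let $T\to\infty$.

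The main technical obstacles are, first, the density of $C_0^\infty(\overline{\R^2_+})$ in $\dot H^1(\R^2_+,t^{1-2\sigma})$, which is standard for $A_2$ weights but requires cutting off carefully in both $x$ and $t$ in the homogeneous setting, and, second, the limit at infinity when $\sigma>1/2$, where the weight $t^{1-2\sigma}$ is singular at $t=0$ so that the shifted half-space $\{s>T\}$ must be handled by separating the far interior $\{t>2T\}$ (where the weights are comparable) from the transition region $\{T<t<2T\}$, which is treated by a Poincar\'e-type argument.
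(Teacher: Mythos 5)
Your proposal is correct, but it takes a genuinely different route to the core estimate \eqref{eq:tracemap} than the paper does. You work entirely on the Fourier side in $x$: Plancherel converts both sides of \eqref{eq:tracemap} into integrals over frequency, the homogeneity of $\dot H^\sigma(\R)$ lets you read off the factor $|\xi|^{2\sigma}$, and the rescaling $t=s/|\xi|$ reduces everything to a single one-dimensional weighted embedding $|g(0)|^2\lesssim\int_0^\infty(|g|^2+|g'|^2)s^{1-2\sigma}ds$, which you verify with the fundamental theorem of calculus, weighted Cauchy--Schwarz (using $\int_0^s r^{2\sigma-1}dr=s^{2\sigma}/(2\sigma)$, finite precisely because $\sigma>0$), and averaging $s\in[1,2]$. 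The paper instead stays on the physical side: it splits $f(x,0)-f(x',0)$ into three differences with $t:=|x-x'|$, controls the two vertical increments by a one-dimensional Hardy inequality in $t$ with weight $t^{1-2\sigma}$, and treats the horizontal increment $f(x,t)-f(x',t)$ via Plancherel in the difference variable $h=x-x'$. Both arguments are sound; the Fourier reduction is arguably more transparent since the dilation scaling $t\mapsto s/|\xi|$ makes the exponent $2\sigma$ emerge automatically, but the real-variable decomposition of the paper is closer in spirit to the estimates that recur throughout the rest of the article (Hardy-type bounds in $t$) and is more robust to perturbations of the geometry. For the density and extension parts you sketch essentially what the paper does, namely exploiting that $|t|^{1-2\sigma}$ is an $A_2$ weight; the paper is a bit more specific, citing Chua's extension theorem for weighted Sobolev spaces and the weighted Poincar\'e inequality of Fabes--Kenig--Serapioni, but the substance is the same. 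Your treatment of the limit at infinity, with the split into $\{t>2T\}$ and the transition region $\{T<t<2T\}$ when $\sigma>1/2$, correctly addresses the one place where a naive comparison of weights on the shifted half-space fails, so nothing is missing.
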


\begin{proof} Given a smooth and compactly supported function $f$, and writing
\begin{eqnarray*}
|f(x,0)-f(x',0)|&\le& |f(x,0)-f(x,t)|+|f(x,t)-f(x',t)|\notag\\
&&+|f(x',t)-f(x',0)|,
\end{eqnarray*}
 with $t:= |x-x'|$, it is straightforward
to establish the stated trace bound. Indeed, the bound follows by applying a Hardy estimate, with weight $t^{1-2\sigma}$ in the variable $t$, to the first and last terms, and Plancherel's identity in the variable $h=x-x'$ to the middle term. The density of, and the extension from, $C_0^\infty(\overline{\R^2_+})$ is proved as follows. By Chua \cite{Ch}, it suffices to prove density of $C^\infty_0(\R^2)$ in the corresponding weighted
Sobolev $\dot H^1(\R^2,|t|^{1-2\sigma})$ space on $\R^2$. Standard mollification together with a cutoff argument, using
the weighted Poincar\'e inequality of Fabes, Kenig and Serapioni \cite{FKS} applies.
Note that $|t|^{1-2\sigma}$ is an $A_2$ weight on $\R^2$. \end{proof}

\subsection{Proof of Theorem \ref{thm1-}}
Let $p$, $1<p<\infty$, be given and let $\sigma\in (0,1)$. Let $\phi:\mathbb R\to\mathbb R$ be a Lipschitz function with constant at most $M$.  Assume that $u$ is $p$-harmonic in  $\Omega=\{(x,y):\ x\in\mathbb R,\ y>\phi(x)\}$ and that
\begin{eqnarray}
\iint_\Omega|\nabla ^2u|^2(y-\phi(x))^{1-2\sigma}dxdy<\infty.
\end{eqnarray}
Let $(x,y)\to (x,t)$, $t=y-\phi(x)$, $f(x,t)=(\partial_xu(x,y),-\partial_yu(x,y))^\ast$. Then,
\begin{eqnarray}\label{a++a++bla+sek1}
\partial_{t}f+B_0Df=B_0\mathcal{E}Df,\ \mathcal{E}:=I-B_0^{-1}B_p^{f,\phi}.
\end{eqnarray}
We note that $\mathcal{E}$ is a multiplier, defined almost everywhere by Corollary~\ref{lemmata1+}, which is bounded by Lemma~\ref{importantlemma}, with bound independent of $f$. Using \eqref{uu9uull1+hha} we see that
\begin{eqnarray}\label{a++a++bla+ksek2}
f_t=e^{-tB_0D}\tilde E_0^{+}g^++\tilde S\mathcal{E}_tDf_t.
\end{eqnarray}
Formally, we expect
$$
  \|g^+\|^2_{\dot H^{\sigma}(\mathbb R)}\lesssim \iint_\Omega|\nabla ^2u|^2(y-\phi(x))^{1-2\sigma}dxdy
$$
from \eqref{si3}. This is indeed the case, and this can be proved rigorously as in \cite{R}. We refer to \cite{R}  for detailed proofs. See Lemma \ref{lemmata1} and Lemma \ref{lemmata2} for the main estimates.

\subsection{Proof of Theorem \ref{thm1}} Let $\sigma\in (0,1)$ and a Lipschitz function $\phi$, with Lipschitz constant $M$, be given. Consider $h\in  \dot H^\sigma(\mathbb R)$, which we without loss of generality assume is non-constant.
Consider the linear boundary value problem
\begin{equation}  \label{bvplin}
\begin{cases}
  \partial_t f+ B D f=0, & \qquad \text{in } \R^2_+, \\
  f_1 = h, & \qquad \text{on } \R.
\end{cases}
\end{equation}
Assume that
$|p-2|<\delta$, where $\delta>0$ can and is chosen so that the boundary value problem in \eqref{bvplin}
is well posed whenever $\|B-B_0\|_{L_\infty(\R^2_+)}<\delta$. To prove Theorem \ref{thm1} we intend to apply the following version of Schauder's fixed point theorem by Singbal. For a
proof, see \cite{B}.

\begin{theorem}\label{fix}
  Let $V$ be a locally convex topological vector space, let $X$ be a non-empty
  closed convex subset of $V$, and let $T$ be a continuous mapping of $X$ into a compact
  subset of $X$. Then $T$ has a fixed point, i.e.,  $T(x)=x$ for some $x\in X$.
\end{theorem}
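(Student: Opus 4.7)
The plan is the classical Schauder--Tychonoff reduction of Theorem~\ref{fix} to Brouwer's finite-dimensional fixed point theorem, via Schauder projections indexed by continuous seminorms on the locally convex space $V$. Let $K\subset X$ denote the compact set containing $T(X)$; since $X$ is closed and convex and $K\subset X$, we have $\overline{\operatorname{co}}(K)\subset X$, so all convex combinations built from points of $K$ will lie in $X$.

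\emph{Finite-dimensional approximation.} Fix a continuous seminorm $p$ on $V$ and $\ep>0$. By compactness choose a finite $p$-$\ep$-net $y_1,\dots,y_n\in K$ for $K$ and define the Schauder projection $\pi_{p,\ep}\colon K\to \Delta:=\operatorname{co}\{y_1,\dots,y_n\}\subset X$ by
$$\pi_{p,\ep}(y):=\frac{\sum_{i=1}^n\lambda_i(y)\,y_i}{\sum_{j=1}^n\lambda_j(y)},\qquad \lambda_i(y):=\max\{0,\,\ep-p(y-y_i)\}.$$
The denominator is positive since the $p$-balls of radius $\ep$ centred at the $y_i$ cover $K$, so $\pi_{p,\ep}$ is continuous, and convexity of $p$ yields $p(\pi_{p,\ep}(y)-y)<\ep$ for all $y\in K$. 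The composition $\pi_{p,\ep}\circ T|_{\Delta}\colon \Delta\to\Delta$ is then a continuous self-map of a compact convex subset of a finite-dimensional subspace of $V$, so Brouwer's theorem supplies $x_{p,\ep}\in \Delta\subset X$ with $\pi_{p,\ep}(T(x_{p,\ep}))=x_{p,\ep}$, whence
$$p\bigl(x_{p,\ep}-T(x_{p,\ep})\bigr)=p\bigl(\pi_{p,\ep}(T(x_{p,\ep}))-T(x_{p,\ep})\bigr)<\ep.$$

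\emph{Passage to the limit.} Direct pairs $(p,\ep)$ by $(p,\ep)\preceq(p',\ep')$ iff $p\le p'$ pointwise on $V$ and $\ep'\le\ep$. The net $\{T(x_{p,\ep})\}$ lies in the compact set $K$ and so, after passing to a subnet, converges to some $x^*\in K\subset X$. For any fixed continuous seminorm $q$ and any $\delta>0$, eventually $(p,\ep)\succeq (q,\delta)$, giving
$$q\bigl(x_{p,\ep}-T(x_{p,\ep})\bigr)\le p\bigl(x_{p,\ep}-T(x_{p,\ep})\bigr)<\ep\le\delta;$$
combined with $T(x_{p,\ep})\to x^*$ this forces $x_{p,\ep}\to x^*$ along the same subnet in the topology of $V$. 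Continuity of $T$ at $x^*$ then gives $T(x_{p,\ep})\to T(x^*)$, so $T(x^*)=x^*$ by uniqueness of limits in a Hausdorff TVS.

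The main obstacle is not the Schauder projection construction itself (routine, once one sees that local convexity of $V$ is precisely what allows convex combinations to stay in the closed convex set $X$) but the net-theoretic bookkeeping in the limit step: in a general locally convex $V$ one must direct the index set of seminorms and tolerances so that subnets can be extracted in the $V$-topology and so that the approximation bound couples the cluster points of $x_{p,\ep}$ and $T(x_{p,\ep})$. In a metrizable $V$ this collapses to a diagonal sequence argument, but in the generality asserted here the net formulation is essential, and the local convexity hypothesis on $V$ cannot be dropped.
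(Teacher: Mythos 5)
Your argument is correct, and it is worth noting at the outset that the paper itself offers no proof of this statement: it records the theorem as Singbal's version of the Schauder--Tychonoff fixed point theorem and simply refers to Bonsall's lecture notes \cite{B}. What you have written is the standard self-contained proof of that cited result: Schauder projections $\pi_{p,\ep}$ built from a finite $p$-$\ep$-net of the compact set $K\supset T(X)$, Brouwer's theorem applied to $\pi_{p,\ep}\circ T$ on the finite-dimensional simplex $\Delta=\co\{y_1,\dots,y_n\}\subset X$, and a net indexed by pairs (seminorm, tolerance) to pass to the limit. Each step checks out: the index set is directed (take $\max(p_1,p_2)$ and $\min(\ep_1,\ep_2)$), the subnet extracted from $\{T(x_{p,\ep})\}\subset K$ is eventually beyond any fixed $(q,\delta)$, so $x_{p,\ep}-T(x_{p,\ep})\to 0$ in $V$ and continuity of $T$ closes the argument. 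The one caveat is that both uniqueness of limits and the identification of the finite-dimensional subspace spanned by $\Delta$ with a Euclidean space (needed for Brouwer) require $V$ to be Hausdorff; the theorem as stated in the paper omits this hypothesis, but it is standard in all formulations of Schauder--Tychonoff (and is satisfied by the space $V$ of multipliers with the strong operator topology to which the paper applies the theorem), so you are right to flag it rather than treat it as a defect. Your closing remark that metrizability would reduce the net bookkeeping to a diagonal sequence is also apt, since the paper in fact observes that its $V$ is metrizable when it invokes compactness of the image of $T$.
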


In the proof of Theorem \ref{thm1} our space $V$ will be $L_\infty(\R^2_+, \R^{2\times 2})$, equipped with semi-norms
$$
  p_f(B):= \left( \iint_{\R^2_+} |Bf|^2 t^{1-2\sigma} dxdt \right)^{1/2}.
$$
I.e.,  $V$ is the subspace of multipliers in the space of bounded linear operators
on $L_2(\R^2_+,t^{1+2\sigma})$, equipped with the strong operator topology. For the set $X$ we choose to be the closed ball $X:= \{ B\in L_\infty(\R^2_+,\R^{2\times 2}):\|B-B_0\|_\infty<r\}$ around $B_0$ in $V$, for some small radius $r<\delta$,
and the map $T$ is set to be
$$
  T: B\mapsto B(f),
$$
where $f$ solves the linear boundary value problem \eqref{bvplin} with coefficients $B$,
and $B(f)$ is calculated
from $f$ pointwise as in \eqref{a++a++ll1}-\eqref{a++a++llkkuu}.

Using this set up we see that to prove Theorem \ref{thm1} it remains to verify the hypothesis of Theorem \ref{fix}. In particular, we have to verify that
$T$ is a continuous mapping of $X$ into a compact subset of $X$.

To establish the continuity of $T$, we use the Cauchy integral representation
from Theorem \ref{thm1-} and the operators and estimates from Section~\ref{fcal} as follows. Consider a strongly convergent sequence $B_n\to B$ of
coefficients in $X$.
Consider the auxiliary linear operator $P^B:\dot H^\sigma(\R)\to \dot H^1(\R^2_+,t^{1-2\sigma})$ given by
\begin{eqnarray}\label{rel1}
  g\mapsto P^Bg:= S_0 g+\tilde S\mathcal{E} (I-S\mathcal{E})^{-1}DS_0 g
\end{eqnarray}
where $\mathcal{E}=I-(B_0)^{-1}B$. For any auxiliary boundary function $g$, this gives a solution $f:= P^Bg$ to the
linear PDE $\partial_t f_t+ B_tDf_t=0$.
It is straightforward to verify that the operators $P^{B_n}$ converge strongly to $P^B$ as $B_n\to B$.
To solve the boundary value problem, we solve the boundary equation $(P^B_0 g)_1=h$, where
$P^B_0g:= \lim_{t\to 0^+}(P^Bg)_t$.
The operators $P^B_0$ are seen to be projections onto the subspace of $\dot H^\sigma(\R)$ consisting
of traces of solutions to $\partial_t f_t + B_t D f_t=0$ in $\R^2_+$.
One then verifies, see \cite{R} and \cite[Lem. 4.3]{AAMc}, that the functions $g_n$ corresponding
to $B_n$ will converge in $\dot H^\sigma(\R)$, as a consequence of the strong convergence of $P^{B_n}_0$ as $B_n\to B$, to the function $g$ corresponding to $B$.
In total, we obtain solutions $f_n= P^{B_n}g_n$ which converge to $f= P^B g$ in $ \dot H^1(\R^2_+;t^{1-2\sigma})$.
Poincar\'e's inequality and the interior estimates of Lemma \ref{rem1} show that
$f_n\to f$ locally uniformly. From the dominated convergence theorem, using Lemma~\ref{importantlemma}, it follows that $B(f_n)=T(B_n)$ converges to
$B(f)= T(B)$ in $V$.

To establish the compactness of the image of $T$ we first note that it suffices, since $V$ is metrizable, to consider a sequence of solutions $f_k$
to linear problems $\partial_t f_k+ B_k D f_k=0, (f_k)_1=h$,
with coefficients $B_k\in X$. However, using the bounds
$$
  \|f_k\|_{\dot H^1(\R^2_+;t^{1+2\sigma})}\le c \|h\|_{\dot H^\sigma(\R)},
$$
we obtain, using
Poincar\'e's inequality, the interior estimates of Lemma \ref{rem1}, and
Arzel\`a--Ascoli's theorem, the existence of a subsequence $f_{k_j}$ which
converges locally uniformly to some limit function $f$, which is quasiregular by Lemma \ref{th2}.
We note that $f$ is non-constant by Theorem \ref{thm0}, since the trace of $f$ is $h$.
The union $Z\subset \R^2_+$
of the zeros of $f$ and all $f^{k_j}$, $j=1,2,\ldots$, has measure zero by (1) of Corollary \ref{lemmata1+}.
Thus
$B(f^{k_j})$ converges pointwise to $B(f)$ on $\R^2_+\setminus Z$.
Using Lemma~\ref{importantlemma},
the dominated convergence theorem applies and shows that $B(f_{k_j})$ converges to $B(f)$ in
the topology of $V$.
This completes the proof of Theorem \ref{thm1}.

   \setcounter{equation}{0} \setcounter{theorem}{0}
   \section{Concluding remarks}
   \noindent
   We here briefly discuss generalizations of Theorem \ref{thm1-} and Theorem \ref{thm1} to the more general
   quasi-linear PDEs in the plane considered in section 2. Indeed, consider \eqref{generalHuu} assuming \eqref{asp} and recall
   Lemma \ref{reduk}. The lemma states that $u$ is a weak solution to \eqref{generalHuu} in $\Omega$ if and only if
$$f(x,t)=(f_1(x,t), f_2(x,t)):=\big(\partial_xu(x,t+\phi(x)),-\partial_yu(x,t+\phi(x))\big)^\ast$$ satisfies
\begin{eqnarray}\label{a++a++bla+luha}
\partial_{t}\begin{bmatrix}
f_1\\
f_2
\end{bmatrix}+B_p^{a,f,\phi}D\begin{bmatrix}
 f_1\\
f_2
\end{bmatrix}=0,
\end{eqnarray}
in $\mathbb R_+^2:=\{(x,t)\in\mathbb R^2:\ t>0\}$ where we now stress the dependence of $B_p^{f,\phi}$ on the symbol $a$ by writing $B_p^{a,f,\phi}$.
Furthermore, $B_p^{a,f,\phi}$ is given in \eqref{a++a++ll}, \eqref{a++a++ll+} and by Lemma \ref{reduk+} we have $B_p^{a,f,\phi}\in L_\infty(\mathbb R^{2}_+,\mathbb C^2)$ and $B$
is accretive in the sense of \eqref{uu3}. Furthermore, the $L_\infty$-bound on $B_p^{a,f,\phi}$, and the parameter of accretivity $\kappa$, depend only on $p$, $M$,  $\nu$, and $L$.  In the following we let  $B_0$ be as in \eqref{a++a++ll3}
  and we recall the operators
introduced in \eqref{uu9uull} and acting in $L_2(\mathbb R)$. Then, as discussed, $B_0D$ is an injective bisectorial operator and
\begin{eqnarray}\label{uu9uull1}
L_2(\mathbb R)=\tilde E^+_0L_2(\mathbb R)\oplus \tilde E^-_0L_2(\mathbb R).
\end{eqnarray}
Let $\epsilon\in(0,1)$, consider $p$, $1<p<\infty$, fixed and let
$\phi$ be a fixed Lipschitz function with Lipschitz constant $M$. Then, in the following we say that $B_p^{a,f,\phi}$ is within $\epsilon$ of $B_0$ if
$$
\sup_{f\ne 0} {|B_p^{a,f,\phi}-B_0|}<\epsilon.
$$
Note that when the underlying operator is the $p$-Laplace operator, i.e., $a(\eta)=|\eta|^{p-2}\eta$, and $B(f)=B_p^{a, f,\phi}$ as in \eqref{a++a++ll1}, then Lemma \ref{importantlemma} states that $$
  \limsup_{p\to 2}\frac{\sup_{f\ne 0}|B_p^{a,f,\phi}-B_0|}{ |p-2|}<\infty
$$
and hence in this case $B_p^{a,f,\phi}$ is within $\epsilon$ of $B_0$, for any $\epsilon\in(0,1)$, as long as $|p-2|$ is small enough. The following two theorems, generalizing Theorem \ref{thm1-} and Theorem \ref{thm1} to the more general
   quasi-linear PDEs in the plane, can be proved by repeating the arguments in section 4.
   \begin{theorem}    \label{thm1-a}
Let $1<p<\infty$, $0<\sigma <1$, and $0\le M<\infty$ be given.
Assume that $\phi:\mathbb R\to\mathbb R$ is a Lipschitz function with $\|\phi'\|_\infty\le M$
and assume that $u$ is a weak solution to \eqref{generalHuu}, assuming \eqref{asp}, in $\Omega=\{(x,y):\ x\in\mathbb R,\ y>\phi(x)\}$ satisfying
\begin{eqnarray}
\iint_\Omega|\nabla ^2u|^2(y-\phi(x))^{1-2\sigma}dxdy<\infty.
\end{eqnarray}
Let $f(x,t)=(\partial_xu(x,y),-\partial_yu(x,y))^\ast$, $y=t+\phi(x)$.
Then there exists $g\in \dot H^\sigma(\R)$ such that the Cauchy integral representation
\begin{eqnarray}\label{a++a++bla+kla}
f= S_0 g +\tilde S ((B_0-B_p^{a,f,\phi}) Df),
\end{eqnarray}
holds in $\R^2_+$. In particular, the
 trace of $f$ is $$f_0= \lim_{t\to 0^+}(S_0g) -\int_0^\infty \big( S_0(B_0-B_p^{a,f,\phi}) Df_s\big)_{-s} ds  \in \dot H^\sigma(\R).$$
\end{theorem}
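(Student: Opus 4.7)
The plan is to reuse the strategy of Theorem \ref{thm1-} essentially verbatim, replacing the $p$-Laplace-specific coefficient matrix $B(f)$ by the more general $B_p^{a,f,\phi}$ from Lemma \ref{reduk}. The key observation is that Lemma \ref{reduk+} supplies exactly the same qualitative properties (uniform $L_\infty$-bound and accretivity, with constants depending only on $p$, $M$, $\nu$, $L$) that Lemma \ref{importantlemma} did for the $p$-Laplace case, so the functional-calculus machinery of Section \ref{fcal} remains available without modification.

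Concretely, I would first apply Lemma \ref{reduk} to reduce \eqref{generalHuu} under \eqref{asp} to the first-order system $\partial_t f + B_p^{a,f,\phi} D f = 0$ in $\R^2_+$, where $f(x,t) = (\partial_x u(x,y), -\partial_y u(x,y))^\ast$ with $y = t + \phi(x)$. Rewriting this as
\begin{equation*}
\partial_t f + B_0 D f = B_0 \mathcal{E} D f, \qquad \mathcal{E} := I - B_0^{-1} B_p^{a,f,\phi},
\end{equation*}
I would invoke Lemma \ref{reduk+} to conclude that $\mathcal{E}$ is a bounded multiplier. Lemma \ref{rem1uu}, combined with the accretivity of $B_p^{a,f,\phi}$, then implies that $f$ is quasiregular, and statement (3) of Corollary \ref{lemmata1+} ensures that $f$ is differentiable with non-vanishing Jacobian almost everywhere, so that $\mathcal{E}_t(x)$ makes sense a.e. The weighted integrability hypothesis on $\nabla^2 u$ translates directly into $D f \in L_2(\R^2_+, t^{1-2\sigma})$.

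Next I would apply the semigroup identity \eqref{uu9uull1+hha} to obtain
\begin{equation*}
f_t = (S_0 g)_t + \tilde S \mathcal{E}_t D f_t, \qquad t > 0,
\end{equation*}
for some $g \in \tilde E_0^+ L_2(\R)$; to promote $g$ to an element of $\dot H^\sigma(\R)$ I would use the square-function identity \eqref{si3} together with Lemma \ref{lem1}, Lemma \ref{lemmata1}, and Lemma \ref{lemmata2}, exactly as in the proof of Theorem \ref{thm1-}. Writing $\mathcal{E} = B_0^{-1}(B_0 - B_p^{a,f,\phi})$ and absorbing the $B_0$ factor into the solid Cauchy kernel via Lemma \ref{CB} then yields \eqref{a++a++bla+kla}. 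The stated identification of $f_0$ follows by sending $t \to 0^+$, splitting $\tilde S$ into near- and far-pieces against the explicit kernel of Lemma \ref{CB}, and using that $\lim_{t \to \infty} f_t = 0$ in $\dot H^\sigma(\R)$ via Theorem \ref{thm0}.

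The only genuinely new verification required, beyond the $p$-Laplace proof, is the uniform-in-$f$ $L_\infty$-bound and accretivity for $B_p^{a,f,\phi}$, which is precisely the content of Lemma \ref{reduk+}; this is the main (and in fact the only) obstacle, and it is already settled. Since the square-function and solid-Cauchy estimates of Section \ref{fcal} depend on the symbol $a$ only through these two constants, the remainder of the argument transfers over unchanged.
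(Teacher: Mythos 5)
Your proposal is correct and follows essentially the same route as the paper, which itself only remarks that Theorem~\ref{thm1-a} ``can be proved by repeating the arguments in section 4'': you rewrite the system from Lemma~\ref{reduk} as a perturbation $\partial_t f+B_0Df=B_0\mathcal{E}Df$, use quasiregularity (Corollary~\ref{lemmata1+}) to define $\mathcal{E}$ almost everywhere, and invoke the representation formula \eqref{uu9uull1+hha} together with the square-function estimates of Section~\ref{fcal}. Your identification of Lemma~\ref{reduk+} as the substitute for Lemma~\ref{importantlemma} is exactly the one new ingredient the generalization requires.
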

\begin{theorem}\label{thm1a}
Let $p, \sigma, M, \phi, $ be as in Theorem~\ref{thm1-a}. Then there  exists $\epsilon_0=\epsilon_0(p,\sigma,M)$, $\epsilon_0\in(0,1)$, such that the following is true. Let $\epsilon\in(0,\epsilon_0)$ and assume that $B_p^{a,f,\phi}$ is within $\epsilon$ of $B_0$. Then, given any boundary data $h\in  \dot H^\sigma(\mathbb R)$, there exists
a weak solution $u$ to \eqref{generalHuu} in $\Omega=\{(x,y):\ x\in\mathbb R,\ y>\phi(x)\}$ satisfying
\begin{eqnarray}
\iint_\Omega|\nabla ^2u|^2(y-\phi(x))^{1-2\sigma}dxdy<\infty,
\end{eqnarray}
and the boundary condition
$$
  \partial_x u(x,\phi(x))= h(x), \qquad x\in\mathbb R,
$$
where the trace of $\nabla u$ is taken in the  sense of Theorem \ref{thm0}. The same solvability result also holds true for the
boundary condition $\partial_y u(x,\phi(x))= h(x)$.
\end{theorem}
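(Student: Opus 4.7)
The plan is to follow the proof of Theorem \ref{thm1} with the Cauchy integral representation now supplied by Theorem \ref{thm1-a}, replacing the perturbative condition $|p-2|<\delta$ by the structural hypothesis that $B_p^{a,f,\phi}$ lies within $\epsilon$ of $B_0$ uniformly in $f$. Set $V:=L_\infty(\R^2_+,\R^{2\times 2})$, viewed as multipliers on $L_2(\R^2_+,t^{1-2\sigma})$ with the strong operator topology induced by the seminorms
\[
p_f(B)=\Bigl(\iint_{\R^2_+}|Bf|^2 t^{1-2\sigma}dxdt\Bigr)^{1/2},
\]
and let $X:=\{B\in V:\|B-B_0\|_\infty\le \epsilon\}$ for $\epsilon\in(0,\epsilon_0)$. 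The threshold $\epsilon_0=\epsilon_0(p,\sigma,M)$ is chosen so that the linear boundary value problem $\partial_t f+BDf=0$ in $\R^2_+$, $f_1|_{t=0}=h$, is well posed for every $B\in X$; this well-posedness follows by perturbation from the constant-coefficient case $B=B_0$, through a Neumann series for $(I-S\mathcal{E})^{-1}$ that is invertible provided $\epsilon_0 \|S\|<1$, where $\|S\|$ denotes the $L_2(\R^2_+,t^{1-2\sigma})$-norm of the Beurling operator bounded in Lemma \ref{lemmata2}(3).

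Define the map $T\colon X\to V$ by $T(B):=B_p^{a,f,\phi}$, where $f$ is the unique solution of the linear problem with coefficients $B$ and boundary data $h$. The inclusion $T(X)\subset X$ is immediate from the hypothesis of the theorem, since ``within $\epsilon$ of $B_0$'' means precisely $\sup_{g\ne 0}|B_p^{a,g,\phi}-B_0|<\epsilon$, so that $\|T(B)-B_0\|_\infty<\epsilon$ regardless of the particular $f$ that arises. A fixed point $B=T(B)$ thus yields an $f$ satisfying the nonlinear system \eqref{a++a++bla+luha}, and recovering $u$ from $f=\overline{\nabla u}$ via Lemma \ref{reduk} produces the desired weak solution to \eqref{generalHuu} with the prescribed trace, interpreted in the sense of Theorem \ref{thm0}.

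To verify continuity of $T$, given $B_n\to B$ strongly in $X$, I factor the solution through the Cauchy parametrix
\[
P^B g:=S_0 g+\tilde S\,\mathcal{E}(I-S\mathcal{E})^{-1}DS_0 g,\qquad \mathcal{E}=I-B_0^{-1}B,
\]
which depends strongly continuously on $B$ by Lemma \ref{lemmata2} and the Neumann-series bound. Solving the boundary projection equation $(P^B_0 g)_1=h$ delivers $g_n\to g$ in $\dot H^\sigma(\R)$ (as in \cite{R} and \cite[Lem.~4.3]{AAMc}), hence $f_n=P^{B_n}g_n\to f=P^B g$ in $\dot H^1(\R^2_+,t^{1-2\sigma})$ by Lemmas \ref{lemmata1}--\ref{lemmata2}. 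Poincar\'e's inequality combined with the interior H\"older estimate of Lemma \ref{rem1}, applicable because each $f_n$ is quasiregular by Lemma \ref{rem1uu}, upgrades this to local uniform convergence; the uniform $L_\infty$ bound of Lemma \ref{reduk+} then permits dominated convergence and yields $T(B_n)\to T(B)$ in $V$.

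Finally, for the precompactness of $T(X)$, which by metrizability of $V$ on bounded sets reduces to sequential precompactness, consider any sequence $T(B_k)=B_p^{a,f_k,\phi}$ with $B_k\in X$. The a~priori bound $\|f_k\|_{\dot H^1(\R^2_+,t^{1-2\sigma})}\le c\|h\|_{\dot H^\sigma(\R)}$ together with Lemma \ref{rem1} gives equicontinuity on compact subsets of $\R^2_+$, so Arzel\`a--Ascoli extracts a locally uniformly convergent subsequence $f_{k_j}\to f$; by Lemma \ref{th2} the limit $f$ is quasiregular, and it is non-constant because its trace equals the non-constant $h$ via Theorem \ref{thm0}. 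The main obstacle, and the point where the planar nature of the problem is essential, is showing that $B_p^{a,f_{k_j},\phi}$ converges pointwise a.e.\ to $B_p^{a,f,\phi}$: one uses Corollary \ref{lemmata1+}(3) to conclude that the countable union of the zero sets of $f$ and the $f_{k_j}$ is Lebesgue negligible in $\R^2_+$, and on the complement pointwise convergence of the coefficient matrices follows from the continuity of $B_p^{a,\cdot,\phi}$ in $f\ne 0$. Lemma \ref{reduk+} provides the $L_\infty$ envelope needed for dominated convergence, proving convergence in $V$. Theorem \ref{fix} now produces the required fixed point and completes the proof.
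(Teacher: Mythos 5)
Your proposal reproduces the paper's Schauder fixed-point argument from the proof of Theorem~\ref{thm1}, which is precisely how the paper indicates Theorem~\ref{thm1a} is to be proved (``can be proved by repeating the arguments in section 4''), and you correctly replace the $p$-Laplace specific Lemma~\ref{importantlemma} by the structural hypothesis together with Lemma~\ref{reduk+}. One small inaccuracy: to conclude that the zero sets of $f$ and the $f_{k_j}$ are negligible, the paper invokes statement~(1) of Corollary~\ref{lemmata1+} (discreteness, hence countability), not statement~(3) as you cite; (3) would require an extra Lebesgue-density argument to reach the same conclusion.
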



\begin{thebibliography}{99}
\bibitem[Ahl]{Ahl} Ahlfors, L., Lectures on quasiconformal mappings, Van Nostrand Mathematical Studies, No. 10, 1966.
\bibitem[ADMc]{ADMc} Albrecht D., Duong X., and McIntosh A., Operator Theory and Harmonic Analysis,
Workshop on Analysis and Geometry, 1995, Part III, Proc. Centre Math. Appl.
Austral. Nat. Univ., 34 (1996), 77-136.
\bibitem[AAAHK]{AAAHK} Alfonseca M. A., Auscher P., Axelsson A., Hofmann S., Kim S., Analyticity of layer
potentials and $L^2$ solvability of boundary value problems for divergence form elliptic
equations with complex $L^\infty$ coefficients, Adv. Math., 226 (2011), 4533-4606.
\bibitem[Ar]{Ar} Arendt W., Semigroups and Evolution Equations: Functional Calculus, Regularity
and Kernel Estimates, Handb. Differ. Equ., C. Dafermos, E. Feireisl, eds.,
Elsevier/North-Holland, Amsterdam, 2004, 1-86.
\bibitem[AIM]{AIM} K. Astala, T. Iwaniec, G. Martin, Elliptic partial differential equations and quasiconformal mappings in the plane, Princeton University press, 2009.
\bibitem[AA]{AA} Auscher, P., and Axelsson, A., Weighted maximal regularity estimates and solvability of
non-smooth elliptic systems I. Invent. Math. 184, 1 (2011), 47-115.
\bibitem[AAH]{AAH}  Auscher P., Axelsson A., Hofmann S., Functional calculus of Dirac operators and
complex pertubations of Neumann and Dirichlet problems, J. Funct. Anal., 255 2
(2008), 374-448.
\bibitem[AAMc]{AAMc} Auscher, P., Axelsson, A., and McIntosh, A., Solvability of elliptic systems with square
integrable boundary data. Ark. Mat. 48 (2010), 253-287.
\bibitem[AHLMcT]{AHLMcT} Auscher, P., Hofmann, S., Lacey, M., McIntosh, A., and Tchamitchian, P., The
solution of the Kato square root problem for second order elliptic operators on $R^n$. Ann. of
Math. (2) 156, 2 (2002), 633-654.
\bibitem[AMcN]{AMcN} Auscher, P., McIntosh, A., and Nahmod, A., Holomorphic functional calculi of operators,
quadratic estimates and interpolation. Indiana Univ. Math. J. 46, 2 (1997), 375-403.
\bibitem[AR]{AR} Auscher, P., and Ros\'en, A., Weighted maximal regularity estimates and solvability of
non-smooth elliptic systems II. Analysis $\&$ PDE 5-5 (2012), 983--1061. DOI 10.2140/apde.2012.5.983.
\bibitem[AKMc]{AKMc} Axelsson, A., Keith, S., and McIntosh, A., Quadratic estimates and functional calculi
of perturbed Dirac operators. Invent. Math. 163, 3 (2006), 455-497.

\bibitem[B]{B}
Bonsall, F. F., Lectures on some fixed point theorems of functional analysis.
Notes by K. B. Vedak Tata Institute of Fundamental Research, Bombay 1962.

	\bibitem[CFMS]{CFMS} L. Caffarelli, E. Fabes, S. Mortola, S. Salsa, Boundary behavior of nonnegative solutions of elliptic operators in divergence form, Indiana J. Math. {30} (1981), 621-640.
\bibitem[C]{C} Calder´on, A.-P., Cauchy integrals on Lipschitz curves and related operators. Proc. Nat.
Acad. Sci. U.S.A. 74, 4 (1977), 1324-1327.


\bibitem[Ch]{Ch} Chua, S.-K., Extension theorems on weighted Sobolev spaces.
Indiana Univ. Math. J.  41, 4  (1992), 1027-1076.


\bibitem[CMcM]{CMcM} Coifman, R. R., McIntosh, A., and Meyer, Y., L'int´egrale de Cauchy d´efinit un op´erateur
born´e sur L2 pour les courbes lipschitziennes. Ann. of Math. (2) 116, 2 (1982), 361-387.
		\bibitem[D]{D} B. Dahlberg, On estimates of harmonic measure,
 {\em Arch  Rational Mech. Anal.} { 65} (1977), 275-288.
\bibitem[DiB]{DiB} E. DiBenedetto, {$ C^{1 + \al} $ local regularity of
weak solutions of degenerate elliptic equations,} Nonlinear Anal.,
 { 7} (1983), 827-850.
\bibitem[DS]{DS} Dunford N., Schwartz J. T., Linear Operators Part I, Pure Appl. Math. Vol. VII,
Interscience, New York 1958.

\bibitem[FKS]{FKS}
Fabes, E., Kenig, C., and Serapioni, R., The local regularity of solutions of degenerate elliptic equations.
Comm. Partial Differential Equations  7, 1 (1982), 77-116.


\bibitem[IM]{IM} T. Iwaniec, G. Martin, Geometric function theory and non-linear analysis, Oxford scientific publications, 2001.
\bibitem[IM1]{IM1} T. Iwaniec, G. Martin, The Beltrami equation, Memoir of the AMS number 893, 2008.
\bibitem[H]{H} Haase M., The Functional Calculus for Sectorial Operators, Oper. Theory Adv.
Appl. Vol. 169, Birkhäuser, Basel 2006.
	\bibitem[JK]{JK} D. Jerison and C. Kenig, {An identity with applications to harmonic measure}, Bull. Amer.
Math. Soc. 2 (1980), 447-451.
\bibitem[L]{L} J.  Lewis, { Regularity of the derivatives of solutions to certain degenerate
elliptic equations,}
Indiana University Mathematics Journal, {32} (6) (1983), 849-858.
\bibitem[LN1]{LN1} J. Lewis and K. Nystr{\"o}m,   {Boundary Behaviour for $ p $-Harmonic Functions in Lipschitz and Starlike Lipschitz
Ring Domains,}  Annales Scientifiques de L'Ecole Normale Superieure, { 40} (2007), 765-813.
\bibitem[LN2]{LN2} J. Lewis and K. Nystr{\"o}m,   {Boundary Behaviour and the Martin Boundary Problem for $ p $-Harmonic
Functions in Lipschitz domains,}  Annals
Math., { 172} (2010), 1907-1948.
\bibitem[LN3]{LN3} J. Lewis and K. Nystr{\"o}m,   {Regularity and Free Boundary Regularity for
the $p$-Laplacian in Lipschitz and $C^1$-domains,}  Annales Acad. Sci. Fenn. Mathematica, { 33} (2008),  523-548.
\bibitem[LN4]{LN4} J. Lewis and K. Nystr{\"o}m,   { Boundary Behaviour of $p$-Harmonic Functions in Domains
Beyond Lipschitz Domains}, Advances in the Calculus of Variations, \textbf{1} (2008), 133-177.
\bibitem[LN5]{LN5} J. Lewis and K. Nystr{\"o}m,
{Regularity of Lipschitz Free Boundaries in Two-phase Problems for the $p$-Laplace Operator}, Adv. Math., { 225} (2010), 2565-2597.
\bibitem[LN6]{LN6} J. Lewis and K. Nystr{\"o}m,
{Regularity of Flat Free Boundaries in Two-phase Problems for the $p$-Laplace Operator},  Annales de l'Institute Henri Poincare Nonlinear Analysis,
{ 29} (2012), 83-108.
\bibitem[LN7]{LN7} J. Lewis and K. Nystr{\"o}m,
{Regularity and Free Boundary Regularity for
the $p$-Laplace Operator in Reifenberg Flat and Ahlfors Regular Domains}, J. Amer. Math. Soc. { 25} (2012), 827-862.
\bibitem[Mc]{Mc} McIntosh A., Operators which have an $H^\infty$ functional calculus, Miniconference on
Operator Theory and Partial Differential Equations, 1986, Proc. Centre Math. Anal.
Austral. Nat. Univ., 14 (1986), 210-231.
\bibitem[Mc1]{Mc1}  McIntosh A., Operator Theory - Spectra and Functional Calculi, Lecture notes taken
by Lashi Bandara, 2010.
\bibitem[McQ]{McQ} McIntosh, A., and Qian, T., Convolution singular integral operators on Lipschitz curves.
In Harmonic analysis (Tianjin, 1988), vol. 1494 of Lecture Notes in Math. Springer, Berlin,
1991, pp. 142–162.
\bibitem[McY]{McY} McIntosh A., Yagi A., Operators of type $\omega$ without a bounded $H^\infty$ functional calculus,
Miniconference on operator theory and partial differential equations, Proc.
Centre Math. Appl. Austral. Nat. Univ., Vol. 24 (1989), 159-172.
\bibitem[Ri]{Ri} S. Rickman, {Quasiregular mappings}, Springer Verlag 1993.
\bibitem[R]{R} A. Ros\'en, {Cauchy non-integral formulas}, To appear in Contemporary Mathematics (Proceedings of the El Escorial conference 2012).
\bibitem[T]{T} P. Tolksdorff, {Everywhere regularity for some
quasi-linear systems with lack of ellipticity,} Ann. Math. Pura
Appl. {134} (1984), no 4, 241-266.
\end{thebibliography}
\end{document}